\DeclareFontFamily{U}{mathx}{\hyphenchar\font45}
\DeclareFontShape{U}{mathx}{m}{n}{<-> mathx10}{}
\DeclareSymbolFont{mathx}{U}{mathx}{m}{n}
\DeclareMathAccent{\widebar}{0}{mathx}{"73}
\newcommand{\dbl}{[\hspace{-0.2ex}[}
\newcommand{\dbr}{]\hspace{-0.2ex}]}
\newcommand{\db}[1]{\dbl {#1} \dbr}
\newcommand{\invlim}{\underleftarrow{\textnormal{lim}}\,}
\def\lim{\mathop{\rm lim}\nolimits}
\def\HH{\mathrm{HH}}
\def\Tor{\mathrm{Tor}}
\def\Ker{\mathrm{Ker}}
\def\Ker{\mathrm{Ker}}
\newcommand{\ctens}{\widehat{\otimes}}
\newcommand{\tn}{\textnormal}
\newcommand{\iso}{\cong}
\numberwithin{equation}{section}
\newtheorem{theorem}{Theorem}[section]
\newtheorem{prop}[theorem]{Proposition}
\newtheorem{lemma}[theorem]{Lemma}
\newtheorem{corol}[theorem]{Corollary}
\theoremstyle{remark}
\newtheorem{remark}[theorem]{Remark}
\theoremstyle{definition}
\newtheorem{defn}[theorem]{Definition}
\newtheorem{example}[theorem]{Example}
\title{Homological properties of extensions of algebras}
\author[a]{Kostiantyn Iusenko}
\author[b]{John William MacQuarrie}
\affil[a]{Instituto de Matem\'{a}tica e Estat\'{i}stica, Univ. de S\~{a}o Paulo, S\~{a}o Paulo, SP, Brazil}
\affil[b]{Universidade Federal de Minas Gerais, Belo Horizonte, MG, Brazil}
\begin{document}

\footnotetext{\textit{Email addresses:} iusenko@ime.usp.br (Kostiantyn Iusenko), john@mat.ufmg.br (John MacQuarrie)}

\maketitle

\begin{abstract}
    We consider a class of extensions of associative algebras, which we refer to as ``strongly proj-bounded extensions''.  We prove that the finiteness of the left global dimension and the support of the Hoch\-schild homology is preserved by strongly proj-bounded extensions, generalizing results of Cibils, Lanzillota, Marcos and Solotar.  Moreover, we show that the finiteness of the big left finitistic dimension is preserved by strongly proj-bounded extensions.  In order to construct examples, we describe a new class of extensions of algebras of finite relative global dimension, which may be of independent interest.  The results apply both for abstract  (meaning no topology) and pseudocompact algebras.

\medskip

\noindent\textbf{Keywords:} Extensions of algebras, Hochschild homology, relative homology, pseudocompact algebras, Han's conjecture, finitistic dimension conjecture.

\end{abstract}

\section{Introduction}

Let $k$ be an algebraically closed field.  In the series of recent papers (see \cite{CLMS20,CLMS20-,CLMS20Arx, CLMS21} and references therein) Cibils, Lanzilotta, Marcos and Solotar consider what they call ``bounded'' extensions of associative $k$-algebras: that is, extensions of algebras $B\subseteq A$ such that $A/B$ has finite projective dimension as a $B$-bimodule, such that $A/B$ is projective as either a left or a right $B$-module, and such that some tensor power $(A/B)^{\otimes_B p}$ is $0$.
The authors compare the homological properties of the algebras in such an extension: namely, they show that the left global dimension of $B$ is finite if, and only if, the left global dimension of $A$ is finite, and that the Hochschild homology groups of $B$ vanish in high enough degree if, and only if the Hochschild homology groups of $A$ vanish in high enough degree.  In particular, when $B\subseteq A$ is a bounded extension of finite dimensional algebras, then Han's conjecture (see \cite{Han06}) holds for $B$ if, and only if, it holds for $A$. 

We generalize bounded extensions as follows.  Say that an extension of $k$-algebras $B\subseteq A$ is  \textit{strongly proj-bounded} if: 
    \begin{enumerate}
    \item $A/B$ has finite projective dimension as a $B$-bimodule;
    \item $A/B$ is projective as either a left or a right $B$-module; 
    \item There is $p\geqslant 1$ such that $A/B^{\otimes_B n}$ is projective as a 
    $B$-bimodule
    for any $n\geqslant p$; 
    \item The $(A^e,B^e)$-projective dimension of  $A$ is finite. 
\end{enumerate}
We have two main goals.  Firstly, we prove that such extensions satisfy the same homological preservation properties as cited above; secondly, we prove that $B$ has finite finitistic dimension if, and only if, $A$ does (Theorem \ref{Theorem.HHhom} and Theorem \ref{theorem abstract preservation of gd and fd}).  As example corollary, one obtains \cite[Theorem A]{GPS} as a consequence of Theorem \ref{theorem abstract preservation of gd and fd}.  The extra generality of the extensions we consider over bounded extensions has two immediate advantages: firstly, it allows the extension $B\leqslant A$ to be `` large'', in the sense that the tensor algebra $T[B,A/B]$ can have arbitrarily long non-zero words $v_1\otimes\hdots \otimes v_s$ ($s\in \mathbb{N}, v_i\in A/B$); secondly, even for finite dimensional algebras, it allows extensions for which $A$ has more primitive idempotents than $B$, and hence can assist in reductions of Han's Conjecture and the Finitistic Dimension Conjecture.

We prove that all the results above are true in the category of pseudocompact algebras.  Pseudocompact algebras, a class of topological algebra, are among the most natural generalization of finite dimensional algebras, inheriting almost all the powerful formal properties of finite dimensional algebras.  They are dual to coalgebras and hence our results have dual versions for coalgebras.  That said, pseudocompact algebras have both informal (eg.\ multiplication is more tractable than comultiplication) and formal (eg.\ Pontryagin duality) advantages over coalgebras.  In nature, pseudocompact algebras and their modules are the objects one should study to understand the representation theory of profinite groups.

The paper is organized as follows: in Section 
\ref{Section pc algebras} we recall the basic notions of pseudocompact algebras and their pseudocompact modules. In Section \ref{Section.Relative Homological algebra} we collect the basic properties of relative projective modules and corresponding resolutions as in \cite{Hoch56}; we also develop a general construction of non-trivial extensions of algebras $B\subseteq A$ with finite relative global dimension (Section \ref{Section Ext.finite.gldim}) that will be used in Section \ref{Section projbounded}  to construct examples of finite dimensional strongly proj-bounded extensions. We believe that Corollary \ref{corol weakly acyclic has finite relgd} of Theorem \ref{theorem algebras with finite relgd} is of independent interest.
In Section \ref{Section. RelHomPC} we define Hochschild homology for pseudocompact algebras and develop the notion of relative projective modules. Section \ref{Section projbounded} is devoted to proj-bounded extensions, giving several examples.  A key ingredient of the cited work of Cibils, Lanzilotta, Marcos and Solotar 
is the so-called Jacobi-Zariski long exact sequence of a bounded extension, which relates the Hochschild homology of $A$, the Hochschild homology of $B$, and the relative Hochschild homology of the extension (see also \cite{Kaygun,KaygunEr}). In Section \ref{Section. JZ sequences} we give a 
Jacobi-Zariski long exact sequence for extensions satisfying Conditions 1 to 3 of a strongly proj-bounded extension (extensions we call simply ``proj-bounded''), in both the abstract and pseudocompact cases.  Finally, in Section \ref{Section.HomProperties} we prove that the finitude of the left global dimension, left big finitistic dimension, and support of the Hochschild homology, are preserved by strongly proj-bounded extensions.  The supporting results in Section \ref{Section.HomProperties} are of quite a general nature and we expect them to have further use.

\textbf{Acknowledgements.} We would like to thank Eduardo N. Marcos for stimulating discussion and a number of important comments about the manuscript. We also thank Changchang Xi who made several important remarks  about relative homological dimension, and pointed out to us that the proof of Corollary \ref{corol weak acyclic finite relgldim as bimod} was not as easy as we'd thought!  We also thank the anonymous referee, whose attentive reading has improved several parts of the article: in particular, the elegant proof of Theorem \ref{Theorem.HHhom} is due to the referee.  The first author was partially supported by FAPESP grant 2018/23690-6 and by CNPq Universal grant 405540/2023-0.   The second author was partially supported by CNPq Universal grants 402934/2021-0 and 401998/2023-1, CNPq Produtividade 1D grant 303667/2022-2, and FAPEMIG Universal grant APQ-00971-22.

\section{Pseudocompact algebras}\label{Section pc algebras}

Let $k$ be an algebraically closed field (thought of as a discrete topological ring).  A pseudocompact $k$-algebra is an inverse limit of finite dimensional associative $k$-algebras, taken in the category of topological algebras -- see for instance \cite{Brumer,IM23} for an introduction to pseudocompact objects.  Morphisms in the category of pseudocompact algebras are continuous algebra homomorphisms.  Pseudocompact algebras arise in several natural contexts: completed group algebras of profinite groups, the objects of study in the representation theory of profinite groups, are pseudocompact (cf.\ \cite{Brumer}); the algebras dual to coalgebras are precisely the pseudocompact algebras (see for instance \cite[Theorem 3.6]{Simson11}).  

Throughout the text, in order to avoid confusion with topological objects, we use the word ``abstract'' to refer to objects without a topology: thus ``abstract algebra'' just means ``algebra''.  Our main results apply to extensions of abstract algebras, as well as to extensions of pseudocompact algebras.  In this article, the main use of pseudocompact algebras will be for the construction of examples: the completed path algebra of a quiver (defined below) is a pseudocompact algebra.  Working with the abstract path algebra $kQ$ of a quiver (even a finite quiver, if it has cycles) is notoriously tricky.  But working with the completed path algebra $k\db{Q}$, many technical difficulties do not arise ($k\db{Q}$ is unital, projective $k\db{Q}$-modules are what one would hope, the Jacobson radical of $k\db{Q}$ is what one would hope, etc.).  The reader whose interest is in finite dimensional or abstract algebras will miss nothing by skipping mentions to pseudocompact algebras.

\begin{defn}
Let $Q$ be a finite quiver.  The \emph{completed path algebra} (cf.\ for instance \cite{DerksonWeymanZelevinsky}) is defined to be the cartesian product
$$k\db{Q} := \prod_{n=0}^\infty kQ_n,$$
where $kQ_n$ is the vector space with basis the paths of $Q$ of length exactly $n$.  Each $kQ_n$ is given the discrete topology and $k\db{Q}$ is given the product topology.  The multiplication of paths is as in the abstract path algebra -- this multiplication extends to a continuous multiplication on $k\db{Q}$.

Now let $Q$ be an arbitrary quiver.  We may consider $Q$ as the union (that is, direct limit) of its finite subquivers $Q_i$.  The operation ``completed path algebra of a finite quiver'' $k\db{-}$ is a contravariant functor (see also \cite{IM20}) to the category of pseudocompact algebras (if $\iota$ is an inclusion of finite quivers, $k\db{\iota}$ sends a path of the larger quiver to itself if it is contained in the smaller, and to $0$ otherwise).    The \emph{completed path algebra of $Q$} is
$$k\db{Q} := {\invlim_{i}} k\db{Q_i}.$$
The category of pseudocompact algebras is closed under taking inverse limits, so $k\db{Q}$ is pseudocompact.
\end{defn}

A general reference for the majority of the claims in the following paragraphs is \cite{Brumer}.  Let $A$ be a pseudocompact algebra.  A \emph{pseudocompact left $A$-module} is an inverse limit of finite dimensional topological left $A$-modules each given the discrete topology, the limit being taken in the category of topological $A$-modules -- when we refer to an $A$-module, we implicitly mean left $A$-module.  Morphisms in the category of pseudocompact $A$-modules are continuous module homomorphisms -- this category is abelian with exact inverse limits.  Pseudocompact right modules and bimodules are defined analogously. The category of pseudocompact left $A$-modules is dual to the category of discrete right $A$-modules (that is, to the category of topological $A$-modules with the discrete topology).  In particular, if $A$ is finite dimensional then the category of pseudocompact $A$-modules is dual to the category of abstract $A$-modules, though for general $A$ they are not the same.  We work exclusively with pseudocompact modules in this article.

Given a pseudocompact algebra $A$, a pseudocompact right $A$-module $U = \invlim U_i$ and a pseudocompact left $A$-module $V = \invlim V_j$, the tensor product $U\otimes_A V$ need not be pseudocompact (see \cite[Proposition 2.2]{MacQuarrieSymondsZalesskii} for situations where it is).  The \emph{completed tensor product} $U\ctens_A V$ is by definition $\invlim_{i,j} U_i\otimes_A V_j$, a pseudocompact vector space.  The completed tensor product operation behaves exactly as one would expect in the category of pseudocompact modules: $-\ctens_A V$ is right exact, $A\ctens_A V \iso V$, etc.  The Tor functors $\tn{Tor}_n^A(U,V)$ are defined as one would expect \cite[\S 2]{Brumer}.

Let $B$ be a closed subalgebra of the pseudocompact algebra $A$.  If $V$ is a pseudocompact $B$-module, the \emph{induced} $A$-module is $A\ctens_B V$, with action from $A$ on the left factor.  If $U$ is a pseudocompact $A$-module, the \emph{restriction} of $U$ to $B$ is simply $U$ with multiplication restricted to $B$.  The restriction of $U$ can be expressed as $\tn{Hom}_A(A,U)$, where $A$ is being treated as an $A - B$-bimodule in the obvious way.  Since the functor $A\ctens_B-$ is left adjoint to the functor $\tn{Hom}_A(A,-)$ \cite[Section 2.2]{MacQuarrieSymondsZalesskii}, it follows that induction is left adjoint to restriction.

A free pseudocompact $A$-module is precisely a direct product of copies of the $A$-module $A$.  A projective pseudocompact $A$-module is precisely a continuous direct summand of a free module (``continuous'' here means that both the inclusion and projection maps are continuous, or equivalently that the corresponding complement is closed).  The category of pseudocompact $A$-modules has projective covers \cite[Chapter II, Theorem 2]{GabrielThesis}.  The functor $-\ctens_A V$ is exact if, and only if, $V$ is projective.  The indecomposable projective left $A$-modules are precisely modules isomorphic to $Ae$, where $e$ is a primitive idempotent of $A$.  In particular, when $Q$ is a quiver, the projective left $k\db{Q}$-modules have the form $k\db{Q}e$, where $e$ is the stationary path at some vertex of $Q$.  Analogous statements hold for right $A$-modules.  Given two pseudocompact algebras $A, B$, the pseudocompact vector space $A\ctens_k B^{op}$ (where $B^{op}$ denotes the opposite algebra to $B$) has the structure of a pseudocompact algebra:
$$(a\ctens b)\cdot (a'\ctens b') := aa'\ctens b'b.$$
In particular, the pseudocompact algebra $A$ has a pseudocompact enveloping algebra $A^e := A\ctens_k A^{op}$.
The category of pseudocompact $A$-bimodules is equivalent, in just the same way as with abstract algebras, to the category of pseudocompact left $A^e$-modules, and to the category of pseudocompact right $A^e$-modules.

Finally, given a pseudocompact algebra $B$ and a pseudocompact $B$-bimodule $M$, the corresponding \emph{completed tensor algebra} is defined to be
$$T_B\db{M} := \prod_{n\in \{0,1,2,\hdots\}}M^{\ctens_B n},$$
where $M^{\ctens_B 0} = B, M^{\ctens_B 1} = M$ and for $n>1$, $M^{\ctens_B n} = M\ctens_B M\ctens_B\hdots\ctens_B M$ ($n$ times).  Endowing this product with the product topology, the completed tensor algebra is a pseudocompact algebra \cite[Section 7.5]{Gabriel73}.

\section{Relative homological algebra}\label{Section.Relative Homological algebra}

\subsection{Relative projective modules, resolutions and relative global dimension}\label{subsection abstract rel proj definitions}

In order to describe relative homological algebra (cf.\, \cite{Hoch56}), we first recall the notion of relatively projective modules.
Given an extension $B \subseteq A$ of algebras, an $A$-module $M$ is called \emph{relatively $B$-projective}, or $(A, B)$-\textit{projective},
if it satisfies either of the following equivalent conditions:
\begin{enumerate}
\item $M$ is isomorphic to a direct summand of the induced module $A\otimes_{B} V$, for $V$ some left $B$-module;
    \item if ever an $A$-module homomorphism onto $M$ splits as a $B$-module homomorphism, then it splits as an $A$-module homomorphism.
\end{enumerate}

For a proof of the equivalence of these conditions, and several other characterizations of relatively projective modules, see for instance \cite[Section 1]{Hoch56} and \cite[Section 1]{Th85}.  In the special case where $B=k$ is the unique subalgebra of $A$ of dimension $1$, the notion of $(A,B)$-projective modules coincides with the notion of projective modules.
At the other end of the spectrum, every $A$-module is $(A,A)$-projective. A projective $A$-module $P$ is always $(A, B)$-projective, and every $A$-module of the form $A \otimes_{B} N$ with $N$ a left $B$-module, is $(A, B)$-projective.  
An exact sequence of $A$-module homomorphisms
$$\cdots\to  M_{n+1}\xrightarrow{f_{n+1}} M_{n}\xrightarrow{f_{n}} M_{n-1}\to 0$$
(some $n\in \mathbb{Z}$) is called $(A, B)$-\textit{exact} if, for each $i\geqslant n$, the kernel of $f_{i}$ is a direct $B$-module summand of $M_{i}$ (cf.\ \cite[Section 1]{Hoch56}).  One may check that a sequence of morphisms 
$\{f_i\,|\,i\geqslant n\}$ is
$(A,B)$-exact if, and only if,
\begin{enumerate}
    \item $f_{i} \circ f_{i+1}=0$ for all $i\geqslant n$, 
    \item 
    there exists a \emph{contracting $B$-homotopy}: that is, a sequence of $B$-module homomorphisms $h_{i}: M_{i} \rightarrow M_{i+1}$ ($i\geqslant n-2$)  such that $f_{i+1} h_{i}+h_{i-1}f_{i}$ is the identity map on $M_{i}$.
\end{enumerate}

One may now develop the concepts of relative projective dimension and relative global dimension. 
Given an $A$-module $M$, we define the \textit{relative projective} dimension of $M$ to be the minimal number $n$, denoted by $\tn{pd}_{(A,B)} M$, such that there is an $(A,B)$-exact sequence
$$
0 \rightarrow P_{n} \rightarrow \cdots \rightarrow P_{1} \rightarrow P_{0} \rightarrow M \rightarrow 0.
$$
with the $P_i$ $(A,B)$-projective.  If such an exact sequence does not exist, the relative projective dimension of $M$ is infinite. 
This definition is equivalent to the corresponding definition from \cite[Section 2]{XiXu}. The \textit{left relative global dimension} $\tn{gldim}(A, B)$ of the extension $B \subseteq A$ is the supremum of the relative projective dimension of the $A$-modules, if this number exists, and infinity otherwise (``left'' because our modules are left modules).

Having relative projective resolutions, the relative derived functors $\tn{Tor}^{(A,B)}_n$ and $\tn{Ext}_{(A,B)}^n$ can be defined, and we refer to \cite{Hoch56,BuHo61} for the details. Recall that given an $A$-bimodule $M$, the Hochschild homology of $A$ with coefficients in $M$ is defined as:
 $$
     \HH_*(A,M)=\Tor_*^{A^e}(M,A).
$$
In particular, when $M=A$ the Hochschild homology of $A$ is defined as $\HH_*(A):=\HH_*(A,A)$. Considering the extension of enveloping algebras $B^{e} \subseteq A^{e}$, the relative Hochschild homology is defined  as:
$$
\HH_{*}(A \mid B, M)=\operatorname{Tor}_{*}^{(A^{e}, B^{e})}(M, A).
$$
These spaces were originally defined in  \cite{Hoch56} -- see \cite[Section 2]{CLMS20-} for the equivalence of the definitions.

We present a useful relative analogue of a well-known result about projective modules:

\begin{lemma}\label{lemma after relprojdim kernel is summand}
Let $A$ be an algebra and $B$ a subalgebra. Suppose that $M$ is an $A$-module of relative projective dimension  $\tn{pd}_{(A,B)}M=d$ and that 
$$\cdots \rightarrow F_1 \rightarrow F_{0} \rightarrow M \rightarrow 0$$ 
is a relative projective resolution of $M$.
Then $e\geqslant d$ if, and only if, the kernel of $F_{e} \rightarrow F_{e-1}$ is a direct summand of $F_e$ as an $A$-module.
\end{lemma}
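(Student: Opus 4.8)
The plan is to carry out the classical syzygy argument, keeping track at each step of the $B$-linear splittings that make everything relative. Write $f_j\colon F_j\to F_{j-1}$ for the differentials of the resolution, with the convention $F_{-1}:=M$, and set $\Omega^0:=M$ and $\Omega^{j}:=\ker(f_{j-1})$ for $1\leqslant j\leqslant e+1$, so that $\Omega^{e+1}=\ker(F_e\to F_{e-1})$ is the module we must exhibit as a direct $A$-summand of $F_e$. Since the resolution is $(A,B)$-exact, for each $0\leqslant j\leqslant e$ the short exact sequence $0\to\Omega^{j+1}\to F_j\xrightarrow{\bar f_j}\Omega^{j}\to 0$ is split as a sequence of $B$-modules.

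The one non-formal ingredient I would establish first is that $\Omega^d$ is $(A,B)$-projective --- the relative analogue of the fact that the $d$-th syzygy of a module of projective dimension $\leqslant d$ is projective. For $d=0$ this is just the hypothesis $\tn{pd}_{(A,B)}M=0$. For $d\geqslant 1$, fix a relative projective resolution $0\to P_d\to\cdots\to P_0\to M\to 0$ of length $d$ (which exists as $\tn{pd}_{(A,B)}M=d$) and compare its $d$-th syzygy, which is isomorphic to $P_d$ and hence $(A,B)$-projective, with $\Omega^d$ by means of the relative version of Schanuel's lemma: the $d$-th syzygies of any two $(A,B)$-exact resolutions of $M$ differ only by $(A,B)$-projective direct summands. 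The proof of relative Schanuel is the usual pullback argument; the only thing to check is that the pullback of two $B$-split epimorphisms is again $B$-split onto each factor, so that the resulting short exact sequences are $(A,B)$-exact and therefore split over $A$, their quotients being $(A,B)$-projective. This yields an isomorphism $\Omega^d\oplus P\iso P_d\oplus P'$ with $P$ and $P'$ suitable $(A,B)$-projective modules, whence $\Omega^d$ is a direct summand of an $(A,B)$-projective module and so is itself $(A,B)$-projective by characterization (1). Alternatively one can sidestep Schanuel using the long exact sequence of relative $\Ext$ associated to an $(A,B)$-exact short exact sequence, together with the vanishing of $\Ext_{(A,B)}^{j}(P,-)$ for $j\geqslant 1$ and $P$ an $(A,B)$-projective module; this is the dimension-shifting version of the same statement, and both facts are standard in relative homological algebra (cf.\ \cite{Hoch56,BuHo61}).

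Next I would prove, by upward induction on $j$ starting from $j=d$, that $\Omega^j$ is $(A,B)$-projective. The base case is the preceding paragraph. For the inductive step, suppose $\Omega^j$ is $(A,B)$-projective with $d\leqslant j\leqslant e$. The short exact sequence $0\to\Omega^{j+1}\to F_j\xrightarrow{\bar f_j}\Omega^j\to 0$ is split over $B$, so the $A$-epimorphism $\bar f_j$ onto the $(A,B)$-projective module $\Omega^j$ splits over $A$ by characterization (2) of relative projectivity. Hence $\Omega^{j+1}$ is a direct $A$-summand of $F_j$; being a summand of the $(A,B)$-projective module $F_j$, it is $(A,B)$-projective, which closes the induction and at the same time records that $\Omega^{j+1}$ is a direct $A$-summand of $F_j$ for every $d\leqslant j\leqslant e$. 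Taking $j=e$ (permissible because $e\geqslant d$) yields that $\Omega^{e+1}=\ker(F_e\to F_{e-1})$ is a direct $A$-summand of $F_e$, which is the assertion.

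The only real obstacle I anticipate is the first step: verifying that the syzygy machinery --- Schanuel's lemma, or equivalently the long exact sequence of relative $\Ext$ and the vanishing of higher relative $\Ext$ on $(A,B)$-projectives --- transfers faithfully to the $(A,B)$-relative context. Everything afterwards is a direct translation of the absolute argument, the recurring device being that an $A$-epimorphism onto an $(A,B)$-projective module splits over $A$ as soon as it splits over $B$; within that first step the only delicate point is that pullbacks preserve $B$-splittings.
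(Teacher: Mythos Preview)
Your argument is correct and uses the same essential ingredients as the paper---relative Schanuel's lemma (the paper cites \cite[p.\ 1538]{Th85}) together with the characterization that a $B$-split surjection onto an $(A,B)$-projective module splits over $A$. The paper organizes this as an interleaved double induction on $d$ and $e$, whereas you first isolate the claim that $\Omega^d$ is $(A,B)$-projective and then do a clean upward induction; both arrangements are equivalent.
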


\begin{proof} It is clear that if the kernel of $F_{e} \rightarrow F_{e-1}$ is an $A$-module direct summand of $F_e$, then $e\geqslant d$.  We prove the other direction, mimicking the proof for normal projective resolutions. We proceed by induction on $d$ and on $e$. If $d=0$, then $M$ is relative projective. By the relative projectivity of $M$ and the existence of a $B$-contracting homotopy, the kernel $K$ of $F_0\to M$ is an $A$-module direct summand of $F_0$.   This finishes the proof if $e=0$, and if $e>0$, then replacing $M$ by $K$ we decrease $e$ and obtain the result by induction.
Assume now that $d>0$. Let 
$$0 \rightarrow P_{d} \rightarrow P_{d-1} \rightarrow \ldots \rightarrow P_{0} \rightarrow M \rightarrow 0$$ be a minimal relative projective resolution of $M$.
Now, by the relative version of Schanuel's Lemma 
 \cite[p.\ 1538]{Th85}, 
 we have $P_{0} \oplus \Ker\left(F_{0} \rightarrow M\right) \cong F_{0} \oplus \Ker\left(P_{0} \rightarrow M\right)$.  But the latter module has relative projective resolution
 $$
0 \rightarrow P_{d} \rightarrow P_{d-1} \rightarrow \ldots \rightarrow P_{2} \rightarrow P_{1} \oplus F_{0} \rightarrow \Ker\left(P_{0} \rightarrow M\right) \oplus F_{0} \rightarrow 0
$$
of length $d-1$, and hence $P_{0} \oplus \Ker\left(F_{0} \rightarrow M\right)$ has relative projective dimension at most $d-1$.
It follows that the induction hypothesis applies to the sequence
$$
F_{e} \rightarrow F_{e-1} \rightarrow \ldots \rightarrow F_{2} \rightarrow P_{0} \oplus F_{1} \rightarrow P_{0} \oplus \Ker\left(F_{0} \rightarrow M\right) \rightarrow 0
$$
of length $e-1$: if $s_i: F_i\to F_{i+1}$ are (all but the last of) the maps of the $B$-contracting homotopy for the resolution of $F_i$'s, then a $B$-contracting homotopy $t_i$ for the new sequence is defined by
\begin{align*}
    t_0 : P_{0} \oplus \Ker\left(F_{0} \rightarrow M\right) & \to P_{0} \oplus F_{1} \\
    (x,y) & \mapsto (x, s_0(y)),
\end{align*}
\begin{align*}
    t_1 : P_{0} \oplus F_{1} & \to F_2 \\
    (x,z) & \mapsto s_1(z),
\end{align*}
and $t_i = s_i$ for $i\geqslant 2$.
It follows from the induction hypothesis that $\Ker\left(F_{e} \rightarrow F_{e-1}\right)$ is a direct summand of $F_e$, as required.
\end{proof}

If $B\subseteq A$ is an extension of finite dimensional algebras and $U$ is a finitely generated $A$-module, the $(A,B)$-\emph{relative projective cover} of $U$ is an $(A,B)$-projective module $P$ together with a surjective $B$-split $A$-module homomorphism $\rho:P\to U$, minimal in the sense that no proper direct summand of $P$ surjects onto $U$ via $\rho$.  Relative projective covers exist and are unique up to isomorphism \cite[Proposition 1.3]{Th85}.

\subsection{Extensions of finite relative global dimension}\label{Section Ext.finite.gldim}

When we define strongly proj-bounded extensions $B\subseteq A$ in Section \ref{Section projbounded}, we will require among other things that $\tn{pd}_{(A^e, B^e)}A$ be finite, which will trivially be the case when $\tn{gldim}(A^e,B^e)$ is finite.  There are several papers in the literature that construct extensions of algebras $B\subseteq A$ with prescribed properties for the relative global dimension $\tn{gldim}(A,B)$. For instance, in \cite{Gr75,XiXu,EHIS04}, extensions of relative global dimension $0$ and $1$ are discussed. 
In \cite{Guo18}, the author presents a general method for constructing non-trivial extensions of algebras of relative global dimension at least $n$, for each given $n$.  Here we construct a class of extensions of algebras whose relative global dimension is finite, which in Section \ref{Section projbounded} will be used to construct non-trivial finite dimensional proj-bounded extensions.  We believe that Corollary \ref{corol weakly acyclic has finite relgd} is of independent interest.

\begin{theorem}\label{theorem algebras with finite relgd}
 Let $Q$ be a quiver together with a partition $V_1,V_2,\dots, V_n$ of the vertex set of $Q$, satisfying the properties:
 \begin{enumerate}
   \item There are no arrows from a vertex in $V_i$ to a vertex in $V_j$ if $i<j$;
   \item There are no arrows between distinct vertices in each $V_i$.
 \end{enumerate}
 Let $A = kQ/I$, with $I$ admissible, and let $B$ be a subalgebra of $A$ generated as a subalgebra by the elements of a quiver $R$ which satisfies the following properties:
 \begin{enumerate}[i.]
    \item Each vertex of $R$ is a sum of vertices of $Q$;
    \item The arrows $\beta$ of $R$ are linear combinations of paths in $Q$ and have the property that for any vertex $e$ of $Q$, there are vertices $h,g$ of $Q$ such that $\beta e = h\beta e$ and $e\beta = e\beta g$.
    \item For each vertex $e$ of $Q$ and loop $\gamma$ of $Q$ at $e$, there is an element $\beta$ of $B$ such that $\gamma = e\beta$.
\end{enumerate} 
Then the relative global dimension $\tn{gldim}(A,B)$ is at most $n-1$.
\end{theorem}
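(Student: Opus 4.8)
The plan is to prove the sharper statement that $\tn{pd}_{(A,B)}M\le n-1$ for every $A$-module $M$, by running the relative bar resolution of $M$ and showing that its $k$-th syzygy is ``supported only over the first $n-k$ blocks'', so that after $n-1$ steps one lands among the relatively projective modules.

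First I would fix notation: write $e_i=\sum_{v\in V_i}v\in A$ and $e_{\le j}=\sum_{i\le j}e_i$. Condition (1) says exactly that a path of $Q$ from a vertex of $V_m$ to a vertex of $V_l$ can exist only if $m\ge l$, i.e.\ $e_lAe_m=0$ whenever $l>m$; hence for any left $A$-module $M$ the subspace $e_{\le j}M$ is an $A$-submodule, and the diagonal block $e_iAe_i=\bigoplus_{v\in V_i}vAv$ is a product of the (finite dimensional, since $I$ is admissible) local algebras $vAv$. I would then do the same bookkeeping inside $B$: using (i) and (ii) — which say that the vertices of $R$ are sums of $Q$-vertices and that the arrows of $R$, decomposed against the $Q$-vertices, respect the block order — one checks that $B$ too is block-lower-triangular for this partition, and, crucially, that together with the loop hypothesis $B$ ``absorbs the diagonal'': the radicals of the $vAv$ lie in $B$, and the relevant idempotents behave as though they lay in $B$ when one tensors over $B$.

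The two things to prove are then: \textbf{(a)} if $N$ is an $A$-module supported on a single block $V_j$ (that is, $e_iN=0$ for $i\ne j$), then $N$ is $(A,B)$-projective — I expect that the counit $A\otimes_B N|_B\to N$ is in fact an isomorphism, since triangularity kills $e_lAe_j\otimes_B N$ for $l\ne j$ while over block $j$ the action of $e_jAe_j$ comes from $B$; and \textbf{(b)} for arbitrary $M$, writing $\Omega^k M$ for the $k$-th syzygy of the relative bar resolution $\cdots\to A\otimes_B A\otimes_B M\to A\otimes_B M\to M\to 0$ (which is $(A,B)$-exact with $(A,B)$-projective terms), the module $\Omega^k M$ is supported on $V_1,\dots,V_{n-k}$. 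Part (b) would go by induction on $k$: if $\Omega^{k-1}M$ is supported on $V_1,\dots,V_{n-k+1}$ then both $A\otimes_B(\Omega^{k-1}M)|_B$ and $\Omega^{k-1}M$ are supported there, and by triangularity of $B$ together with the absorption property the counit is an isomorphism over the top block $V_{n-k+1}$, so its kernel $\Omega^k M$ is supported on $V_1,\dots,V_{n-k}$.

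Granting (a) and (b), the proof finishes quickly: $\Omega^{n-1}M$ is supported on $V_1$ alone, hence $(A,B)$-projective by (a), and truncating the relative bar resolution at step $n-1$ yields an $(A,B)$-exact resolution
$$0\to\Omega^{n-1}M\to A\otimes_B A\otimes_B\cdots\otimes_B M\to\cdots\to A\otimes_B M\to M\to 0$$
of length $n-1$ by $(A,B)$-projectives, so $\tn{pd}_{(A,B)}M\le n-1$; as $M$ is arbitrary, $\tn{gldim}(A,B)\le n-1$. I expect the real work to lie in (a), and in the matching step of (b): making precise the sense in which $B$ ``contains enough of the diagonal'' to force block-supported modules to be relatively projective, bearing in mind that the vertices of $R$ need not be idempotents of $A$ sitting inside single blocks. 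This is presumably exactly why hypotheses (i), (ii) and the loop condition are stated in their somewhat technical form, and it is the step that will demand the most care.
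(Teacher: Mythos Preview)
Your overall strategy --- showing that a suitable relative syzygy of $M$ is supported on fewer blocks, so that after $n-1$ steps one lands in the relatively projective modules --- is exactly the paper's strategy. But the execution has a genuine gap.

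Claim (b) is false for the bar resolution. The kernel of the counit $A\otimes_B U\to U$ need \emph{not} drop in support, because of ``off-diagonal'' tensors $Ae\otimes_B fU$ with $e\neq f$. A minimal example: take $Q=(2\xrightarrow{\alpha}1)$, $V_i=\{i\}$, $B=k\cdot(e_1+e_2)$, and $U=S_1\oplus S_2$. Then $e_2\otimes s_1\in A\otimes_k U$ is nonzero, lies in the kernel of the counit, and satisfies $e_2\cdot(e_2\otimes s_1)\neq 0$; so $\Omega^1U$ is still supported on $V_2$. Your assertion that ``triangularity kills $e_lAe_j\otimes_B N$ for $l\neq j$'' only handles $l>j$; the problem is elements with $e$ in the top block tensored with $fU$ for $f$ in a lower block.

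This is precisely the obstacle the paper's proof isolates. The paper shows --- and this is where condition (ii) on the arrows of $R$ is actually used --- that $A\otimes_B U$ decomposes \emph{as an $A$-module} into a ``diagonal'' summand $A_S=\sum_{e\in Q_0}Ae\otimes_B eU$ and an ``off-diagonal'' summand $A_D=\sum_{e\neq f}Ae\otimes_B fU$. One then discards $A_D$ and works with the surjection $A_S\to U$ (equivalently, with the relative projective cover, which sits inside $A_S$). It is the kernel of $A_S\to U$, not of the full counit, that drops in support; the loop hypothesis is exactly what makes $A_S\to U$ injective over the top block. Your sketch never produces this splitting, and without it the induction in (b) does not go through.

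A smaller point: claim (a) as stated, for an arbitrary block $V_j$, is also false (in the example above, $S_2$ is supported on $V_2$ alone but is not projective, hence not $(A,k)$-projective). You only need it for $V_1$, where the conclusion is correct, but your proposed mechanism --- that the counit is an isomorphism --- is wrong even there: with $N=S_1$ one has $A\otimes_k S_1\cong A$, three-dimensional, mapping onto the one-dimensional $S_1$.
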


We make some elementary observations regarding Condition ii.\ that will help us in the proof: let $\beta$ be an arrow of $R$, hence a linear combination of paths of $A$, possibly with different start and end points in $Q$. If $e\beta\neq 0$, the condition $e\beta = e\beta g$ says that $e\beta$ has a ``unique start point'' $g$.  Then also $\beta g = e\beta g$, since otherwise $\beta g$ would not satisfy the symmetric condition.  The image to have in mind is that $\beta$ can be the sum of the paths
$$\xymatrix{\bullet & \bullet & \bullet\ar[l] & \bullet\ar@/_1pc/[lll]}$$
but not the sum of the paths
$$\xymatrix{\bullet & \bullet\ar[l] & \bullet\ar@/_1pc/[ll]}$$
Suppose that $e_1\beta = e_1\beta g_1$ and $e_2\beta = e_2\beta g_2$ are non-zero.  Then (since $I$ is admissible)
$$e_1 = e_2 \,\Longleftrightarrow\, e_1\beta = e_2\beta \,\Longleftrightarrow\, e_1\beta g_1 = e_2\beta g_2 \,\Longleftrightarrow\, \beta g_1 = \beta g_2 \,\Longleftrightarrow\, g_1 = g_2.$$

\begin{proof}
The theorem follows easily from $n-1$ applications of the following principal claim: If $U$ is a left $A$-module supported on vertices in $V_1\cup \hdots\cup V_m$ (some $1\leqslant m\leqslant n$) and $P\to U$ is a relative projective cover of $U$, then its kernel is supported on vertices in $V_1\cup \hdots \cup V_{m-1}$.  Our task is to prove the principal claim, so fix such a $U$.

\medskip

Consider first the natural map $\rho : A\otimes_B U\to U$.  We will show that $A\otimes_B U = A_S\oplus A_D$, where
 \begin{align*}
 A_S &= \sum_{e\in Q_0} Ae \otimes_{B} eU,\\
 A_D &= \sum_{e\neq f\in Q_0} Ae \otimes_{B} fU.
 \end{align*}
 The notation $Ae \otimes_{B} eU$ is being used abusively to refer to the subspace of $A\otimes_B U$ generated by elements of the form $ae\otimes eu$, with $a\in A$ and $u\in U$, and similarly with $Ae \otimes_{B} fU$.  It's clear that $A_S$ and $A_D$ are $A$-submodules of $A\otimes_B U$ and that their sum is $A\otimes_B U$.  We need to check that their intersection is $0$. So fix an element 
 $$x = \sum_s a_se_s \otimes_{B} e_su_s = 
 \sum_t a_te_t \otimes_{B} f_tu_t$$ 
 of the intersection, where the first expression is written in terms of $A_S$ and the second is written in terms of $A_D$ (so $e_t\neq f_t$).  Consider $A\otimes_k U$ and the subspaces
  \begin{align*}
 \widetilde{A_S} &= \sum_{e\in Q_0} Ae \otimes_{k} eU,\\
 \widetilde{A_D} &= \sum_{e\neq f\in Q_0} Ae \otimes_{k} fU.
 \end{align*}
 It's clear that $A\otimes_k U = \widetilde{A_S}\oplus\widetilde{A_D}$. Denote by $Y, Z$ the elements 
 $$Y = \sum_s a_se_s \otimes_{k} e_su_s \hbox{ and } Z = \sum_t a_te_t \otimes_{k} f_tu_t$$ 
 in $\widetilde{A_S}, \widetilde{A_D}$ respectively.  We have by the construction of the tensor product over $B$ that
 $$Y - Z = \sum_{r\in C} a_re_rb_r\otimes f_ru_r
  - a_re_r\otimes b_rf_ru_r$$
  wherein we may suppose that each $a_r$ is a non-zero scalar multiple of a path in $A$ and each $b_r$ is a path in $R$.  Condition ii.\ of our hypotheses implies that the element $e_rb_r = e_rb_rg$ for some vertex $g\in Q_0$, and hence the element $a_re_rb_r\otimes f_ru_r$ is in either $\widetilde{A_S}$ (if $g = f_r$) or $\widetilde{A_D}$ (if $g\neq f_r$), and similarly with $a_re_r\otimes b_rf_ru_r$.  We claim both elements are in the same one.  Write $e_rb_r = e_rb_rg$ and $b_rf_r = hb_rf_r$ for $g,h\in Q_0$.  Supposing everyone is non-zero (the claim is trivial otherwise) we have by the observations before the proof that $e_r = h \Longleftrightarrow g = f_r$.  This is precisely the claim.  Hence, there is a subset $E$ of $C$ such that  
  $$Y = \sum_{r\in E} a_re_rb_r\otimes f_ru_r
  - a_re_r\otimes b_rf_ru_r.$$
  Thus $x = 0$, completing the proof that $A\otimes_{B} U = A_S\oplus A_D$.
 
 \medskip
 
 It is clear that the restriction $\rho|_{A_S} : A_S \to U$ is surjective.  It thus remains to check that its kernel is supported on vertices in $V_1\cup \hdots \cup V_{m-1}$.  Certainly $A_S$ is supported on $V_1\cup \hdots \cup V_{m}$, because $U$ is and there are no arrows from $V_1\cup \hdots \cup V_{m}$ to $V_{m+1}\cup \hdots \cup V_{n}$ by hypothesis.  Denote by $E$ the sum of the vertices in $V_m$.  By the conditions on the quiver $Q$, it's enough to check that the map $A_S\cap EAE\otimes_B U\to U$ is injective.  Since there are no arrows between distinct vertices in $V_m$, we have 
 $$A_S\cap EAE\otimes_B U = \bigoplus_{e\in V_m} eAe\otimes eU$$
 and hence it's enough to check that the map $eAe\otimes eU \to U$ is injective for each $e\in V_m$.  Fix an element $\sum_r e\delta_r e \otimes eu_r\in \tn{Ker}(\rho)$, so that $\sum_r e\delta_ru_r = 0$.  Each $e\delta_re$ is a linear combination of products of loops at $e$ and hence by condition iii.\ we can write $e\delta_re = \sum_s e\beta_{rs}$ with $\beta_{rs}\in B$.  Now
 \begin{align*}
     \sum_r e\delta_re\otimes eu_r = \sum_r\sum_s e\beta_{rs}\otimes eu_r = \sum_r\sum_s e\otimes \beta_{rs}eu_r & = \sum_r e\otimes e\delta_ru_r  
      = e\otimes(\sum_r e\delta_ru_r) = 0.
 \end{align*}
 \end{proof}

The quivers $Q$ whose vertices can be partitioned as in Theorem \ref{theorem algebras with finite relgd} are precisely those quivers that are acyclic once the loops are removed.  Algebras $A$ of the form given in Theorem \ref{theorem algebras with finite relgd} have been studied in several places (e.g. \cite{CMMP97,MMP00} and references therein), as they are precisely the finite dimensional algebras whose indecomposable projective modules can be labelled as $P_i$ in such a way that there are no $A$-module homomorphisms from $P_j\to P_i$ when $i$ is strictly less that $j$.  Such algebras are referred to in \cite{CoelPlatz} as ``weakly triangular'' algebras.

The following is a a relative version of the well-known result that any finite dimensional algebra whose quiver is acyclic has finite global dimension.  Recall that if ever $B$ is a subalgebra of $A$ and $z$ is a unit of $A$, we may consider the conjugate subalgebra ${}^zB = \{zbz^{-1}\,|\,b\in B\}$.  If ever $V$ is a $B$-module, we obtain a conjugate ${}^zB$-module $z(V)$, whose elements we denote by $zv$, for $v\in V$.  The multiplication is defined as ${}^zb\cdot zv := zbv$ for $b\in B, v\in V$.  Given a homomorphism of $B$-modules $\alpha:U\to V$ we define a homomorphism of ${}^zB$-modules $z(\alpha) : z(U)\to z(V)$ by $z(\alpha)(zu) := z\alpha(u)$.  The conjugation operation $z(-)$ can thus be treated as an exact functor from the category of $B$-modules to the category of ${}^zB$-modules.  If $V$ is a $B$-module, one has an isomorphism of $A$-modules $z(A\otimes_B V)\iso A\otimes_{{}^zB}z(V)$ given by $z(a\otimes v)\mapsto az^{-1}\otimes zv$.  Putting this together, one may check that $\tn{gldim}(A,B) = \tn{gldim}(A, {}^zB)$ for any unit $z$ of $A$.  Note that every unital subalgebra $B$ of a bounded path algebra $kQ/I$ is conjugate, by \cite{Malcev42}, to a subalgebra having a complete set of primitive idempotents that are sums of vertices of $Q$.

\begin{corol}\label{corol weakly acyclic has finite relgd}
Let $Q$ be a quiver with vertices $1,\hdots,n$ such that there are no arrows $i\to j$ when $i$ is strictly smaller than $j$, and let $A = kQ/I$, with $I$ admissible.  Let $B$ be conjugate to a subalgebra of $A$ generated as a subalgebra by the elements of a quiver $R$ satisfying the following properties: 
\begin{itemize}
    \item The vertices of $R$ are sums of vertices of $Q$;
    \item The arrows $\beta$ of $R$ are such that $\beta = f\beta e$ where $e,f$ are vertices of $Q$;
    \item $R$ contains every loop of $Q$.
\end{itemize}
Then $\tn{gldim}(A,B)\leqslant n-1$.
\end{corol}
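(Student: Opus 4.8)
The plan is to obtain Corollary~\ref{corol weakly acyclic has finite relgd} as the special case of Theorem~\ref{theorem algebras with finite relgd} in which every block $V_i$ of the partition is a single vertex. First I would remove the word ``conjugate'' from the hypothesis: writing $B = {}^zC$ for a unit $z$ of $A$ and $C$ the subalgebra of $A$ generated by the vertices and arrows of $R$, the paragraph preceding the corollary gives $\tn{gldim}(A,B) = \tn{gldim}(A,{}^zC) = \tn{gldim}(A,C)$, so it is enough to bound $\tn{gldim}(A,C)$, i.e.\ to treat the case in which $B$ itself is generated as a subalgebra by the elements of $R$.

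Next I would take $V_i := \{i\}$ for $i = 1,\dots,n$. This is a partition of the vertex set of $Q$; Condition~(1) of the theorem (no arrows from $V_i$ to $V_j$ when $i<j$) is precisely the standing hypothesis that $Q$ has no arrows $i\to j$ with $i<j$, and Condition~(2) (no arrows between distinct vertices of a given $V_i$) holds vacuously since each $V_i$ is a singleton. It then remains to verify Conditions~i and~ii on $R$ together with the loop condition. Condition~i (vertices of $R$ are sums of vertices of $Q$) is assumed outright. For Condition~ii: an arrow $\beta$ of $R$ satisfies $\beta = f\beta e$ with $e,f\in Q_0$, so $\beta$ lies in $fAe$ and is thus a $k$-linear combination of paths of $Q$ from $e$ to $f$; moreover, for an arbitrary vertex $e'$ of $Q$ we have $\beta e' = f\beta(ee') \in \{\beta,0\}$ and $e'\beta = (e'f)\beta e \in \{\beta,0\}$, and since $\beta = f\beta = \beta e$ the choices $h := f$ and $g := e$ satisfy $\beta e' = h\beta e'$ and $e'\beta = e'\beta g$ in all cases. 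Finally, any loop $\gamma$ of $Q$ at a vertex $e$ belongs to $R$ by hypothesis, hence lies in $B$; since $\gamma = e\gamma e$ we have $\gamma = e\beta$ with $\beta := \gamma \in B$, which is the loop condition.

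Having verified every hypothesis, Theorem~\ref{theorem algebras with finite relgd} applied to the partition $V_1,\dots,V_n$ yields $\tn{gldim}(A,C)\leqslant n-1$, and by the first step $\tn{gldim}(A,B) = \tn{gldim}(A,C)\leqslant n-1$. No step here is genuinely difficult; the only point requiring a little care is the bookkeeping that translates the rigid hypothesis ``$\beta = f\beta e$'' of the corollary into the more permissive ``unique start/end point'' Condition~ii of the theorem, and that recognizes ``$R$ contains every loop of $Q$'' as exactly what supplies the final loop condition.
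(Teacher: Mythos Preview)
Your argument is correct and follows exactly the paper's own approach: the paper's proof is the single line ``This is immediate from Theorem~\ref{theorem algebras with finite relgd}, taking $V_i = \{i\}$,'' and you have simply unpacked this, including the reduction from the conjugate case via the paragraph preceding the corollary. The careful verification you give of Condition~ii and the loop condition is accurate and matches what the theorem requires.
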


\begin{proof}
This is immediate from Theorem \ref{theorem algebras with finite relgd}, taking $V_i = \{i\}$. 
\end{proof}

\begin{example}
Consider $A=kQ/I$, where $Q$ is
$$\xymatrix{1\ar[rd]_{\gamma}\ar[rr]^{\alpha} && 2\ar@(rd,ru)[]_{\delta}\ar[ld]^{\beta} \\
& 3 & }$$
and $I$ is an admissible ideal of $kQ$. Let $B$ be the subalgebra of $A$ generated by the following quiver $R$:
$$\xymatrix{1+3 \ar@(ru,lu)[]_{\gamma-\beta\alpha} && 2\ar@(ru,lu)[]_{\delta}}$$
Then $B=kR/\langle I\cap kR, (\gamma-\beta\alpha)^2\rangle$. It is easily seen that the conditions of Corollary \ref{corol weakly acyclic has finite relgd} are satisfied and hence $\tn{gldim}(A,B)\leqslant 2$.
\end{example}

\begin{example}
One might wonder whether a more general phenomenon is true: if a finite dimensional algebra $A$ has finite global dimension and $B$ is a subalgebra, must $A$ have finite relative global dimension with respect to $B$?  The answer is no, as the following example shows: let $A$ be the algebra $kQ/I$, where $Q$ is
$$\xymatrix{1\ar[rd]_{\gamma} && 2\ar[ll]_{\beta} \\
& 3\ar[ru]_{\alpha} & }$$
and $I$ is generated by $\beta\alpha$ and $\alpha\gamma$.  Then $\tn{gldim}(A) = 3$.  Let $B$ be the subalgebra with basis $\langle e_1, e_2, e_3, \gamma\beta\rangle$.  The $B$-relative projective dimension of every simple module is infinite.  There are similar examples starting with a cycle of length $n$ for every $n\geqslant 2$.
\end{example}

\begin{example}
Theorem \ref{theorem algebras with finite relgd} is considerably more general than Corollary \ref{corol weakly acyclic has finite relgd}.  For example, Theorem \ref{theorem algebras with finite relgd} says that if $A$ is the path algebra of 
$$\xymatrix{2 \ar@(lu,ld)[]_{\gamma} && 1\ar@(rd,ru)[]_{\beta}\ar[ll]_{\alpha}}$$
modulo the ideal generated by $\beta^2$ and $\gamma^2$, and $B$ is the subalgebra with basis $\{e_1+e_2\,,\, \beta+\gamma\}$, then $\tn{gldim}(A,B)$ is at most 1.
\end{example}

\begin{corol}\label{corol weak acyclic finite relgldim as bimod}
Let $A,B$ be as in Corollary \ref{corol weakly acyclic has finite relgd}.  The relative global dimension $\tn{gldim}(A^e, B^e)$ is at most $2n-2$.
\end{corol}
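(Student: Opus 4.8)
The plan is to exhibit the extension $B^e\subseteq A^e$ as an instance of Theorem~\ref{theorem algebras with finite relgd}, with a partition of the relevant vertex set into $2n-1$ blocks; the bound $\tn{gldim}(A^e,B^e)\leqslant(2n-1)-1=2n-2$ is then immediate. First I would remove the conjugation from the hypothesis. By the discussion of conjugate subalgebras preceding Corollary~\ref{corol weakly acyclic has finite relgd}, $\tn{gldim}(A^e,C)=\tn{gldim}(A^e,{}^{\zeta}C)$ for every unit $\zeta$ of $A^e$; and if $B={}^{z}B_0$ for a unit $z$ of $A$, then a direct computation with the multiplication $(a\otimes b)(a'\otimes b')=aa'\otimes b'b$ shows $B^e=B\otimes_k B^{op}={}^{\,z\otimes z^{-1}}(B_0^e)$, where $z\otimes z^{-1}$ is a unit of $A^e$. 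Hence $\tn{gldim}(A^e,B^e)=\tn{gldim}(A^e,B_0^e)$ and we may assume outright that $B$ is generated as a subalgebra by the elements of a quiver $R$ with the stated properties.

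Next I would describe $A^e=A\otimes_k A^{op}$ concretely. Writing $A=kQ/I$, it is standard (recall $k$ is algebraically closed) that $A^e\cong k(Q\times Q^{op})/J$ with $J$ admissible, where $Q\times Q^{op}$ is the tensor product quiver: its vertices are the $n^2$ symbols $e_i\otimes e_j$ (for $i,j\in Q_0=\{1,\dots,n\}$), its \emph{horizontal} arrows are $\alpha\otimes e_j$ for $\alpha$ an arrow of $Q$ and $j$ a vertex, and its \emph{vertical} arrows are $e_i\otimes\gamma^{op}$ for $\gamma$ an arrow of $Q$ and $i$ a vertex; the swap $a\otimes b\mapsto b\otimes a$ identifies $A^e$ with $(A^{op})^e$ and interchanges the two types of arrow. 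Since $Q$ has no arrow $i\to j$ with $i<j$, every arrow of $Q\times Q^{op}$ either fixes or decreases the integer $g(e_i\otimes e_j):=i-j\in\{1-n,\dots,n-1\}$, and decreases it strictly unless it arises from a loop of $Q$. I would therefore put $V_a:=\{\,e_i\otimes e_j : i-j=a-n\,\}$ for $a=1,\dots,2n-1$. This is a partition of the vertex set into $2n-1$ blocks; there are no arrows from $V_a$ to $V_b$ when $a<b$ (arrows decrease $g$); and the only arrows internal to a block are loops (they fix $g$). So conditions (1) and (2) of Theorem~\ref{theorem algebras with finite relgd} hold.

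Then I would supply the subalgebra datum for $B^e$. Let $v_1,\dots,v_r$ be the vertices of $R$ (a complete orthogonal set of idempotents of $B$, each a sum of vertices of $Q$), and for $i\in Q_0$ let $v(i)$ be the unique $v_s$ with $e_iv_s=e_i$. As $B^e$ is generated by its subalgebras $B\otimes 1$ and $1\otimes B^{op}$, which together with the $v_s\otimes v_t$ are generated by the elements $\beta\otimes v_t$ and $v_s\otimes\gamma^{op}$ ($\beta,\gamma$ arrows of $R$), I would take $R'$ to be the quiver with vertices the $v_s\otimes v_t$ and arrows these elements. Each $v_s\otimes v_t$ is a sum of vertices of $Q\times Q^{op}$, so condition (i) holds. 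For condition (ii), take a horizontal generator $\beta\otimes v_t$ (the vertical ones being symmetric under the swap $A^e\cong(A^{op})^e$). It is a $k$-linear combination of paths of $Q\times Q^{op}$, and for a vertex $e_i\otimes e_j$ one computes $(\beta\otimes v_t)(e_i\otimes e_j)=\beta e_i\otimes e_j$ when $\beta e_i\neq 0$ and $v(j)=v_t$, and $0$ otherwise; in the non-zero case the ``unique start point'' observation recorded before the proof of Theorem~\ref{theorem algebras with finite relgd}, applied to $\beta$ at the vertex $e_i$, gives a vertex $h$ of $Q$ with $\beta e_i=h\beta e_i$, and then $h\otimes e_j$ is a vertex of $Q\times Q^{op}$ fixing $(\beta\otimes v_t)(e_i\otimes e_j)$ on the left; the symmetric identity is checked the same way. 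Finally, a loop of $Q\times Q^{op}$ at $e_i\otimes e_j$ is either $\gamma\otimes e_j$ with $\gamma$ a loop of $Q$ at $i$, or $e_i\otimes\gamma^{op}$ with $\gamma$ a loop of $Q$ at $j$; as $R$ contains every loop of $Q$ we have $\gamma\in B$, and the identities $\gamma\otimes e_j=(e_i\otimes e_j)(\gamma\otimes v(j))$ and $e_i\otimes\gamma^{op}=(e_i\otimes e_j)(v(i)\otimes\gamma^{op})$ (using $e_i\gamma=\gamma=\gamma e_j$) place these loops in $(e_i\otimes e_j)B^e$, since $\gamma\otimes v(j)$ and $v(i)\otimes\gamma^{op}$ lie in $B\otimes_k B^{op}=B^e$. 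Theorem~\ref{theorem algebras with finite relgd} then gives $\tn{gldim}(A^e,B^e)\leqslant 2n-2$.

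The main obstacle is exactly the construction of $R'$ and the verification of condition (ii) and the loop condition for it — which, as the acknowledgement to Changchang Xi hints, is less automatic than it may look. One cannot take the arrows of $R'$ to be the naive $\beta\otimes e_j$, because the vertex idempotents $e_j$ of $Q$ generally do not belong to $B$; one is forced to the generators $\beta\otimes v_t$ and $v_s\otimes\gamma^{op}$, and then (a) such an arrow is in general a combination of several paths of $Q\times Q^{op}$ with several candidate start and end vertices, so condition (ii) genuinely needs the ``unique start point'' analysis of Theorem~\ref{theorem algebras with finite relgd}, and (b) realising a loop $\gamma\otimes e_j$ inside $(e_i\otimes e_j)B^e$ requires not only $\gamma\in B$ but also an idempotent of $B$ (namely $v(j)$) to absorb the ``$e_j$'' in the second tensor coordinate. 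Once these are in place, counting the $2n-1$ blocks is the easy part.
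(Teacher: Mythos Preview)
Your proof is correct and follows essentially the same route as the paper's own argument: both identify $A^e$ with a bound quiver algebra on $Q\times Q^{op}$, partition the $n^2$ vertices into the $2n-1$ anti-diagonals indexed by $i-j$ (the paper uses $j-i$, an immaterial relabelling), reduce the conjugacy via $z\otimes z^{-1}$, and build $R'$ from vertices $v_s\otimes v_t$ and arrows $\beta\otimes v_t$, $v_s\otimes\gamma$ before invoking Theorem~\ref{theorem algebras with finite relgd}. Your treatment is in fact more explicit than the paper's, which dispatches condition~(ii) and the loop condition with ``easily checked''; your discussion of why one is forced to the generators $\beta\otimes v_t$ rather than the na\"ive $\beta\otimes e_j$ is exactly the subtlety alluded to in the acknowledgement.
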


\begin{proof}
We make use of the description $Q'$ of the quiver of $A^e$ given in \cite[Proposition 2.2]{SkEnv}.  Its vertices are pairs $(i,j)$ with $i,j\in \{1, \hdots,n\}$, where $1,\hdots,n$ are the vertices of $Q$.  For each arrow $\alpha:a\to b$ in $A$, there are $n$ arrows $(\alpha,i) : (a,i)\to (b,i)$ and $n$ arrows $(i,\alpha) : (i,b) \to (i,a)$ in the quiver of $A^e$. 

We partition the vertices.  For each $s\in \{1-n, \dots, n-1\}$ define the set 
$$V_s = \{(i,j)\,|\,i-j = s\}.$$
One may check that there are no arrows from a vertex of $V_a$ to a vertex of $V_b$ when $a<b$, and that the only arrows between vertices in a given $V_a$ are loops, so Conditions $1,2$ of Theorem \ref{theorem algebras with finite relgd} are satisfied.  

We turn now to the conditions on the subalgebra.  Note that if $B$ and $C = {}^zB$ are conjugate subalgebras of $A$, then ${C}^e = {}^{z\otimes z^{-1}}(B^e)$ and so $B^e, C^e$ are conjugate subalgebras of $A^e$.  We may thus suppose that $B$ is equal, rather than just conjugate, to the subalgebra described in Corollary \ref{corol weakly acyclic has finite relgd}. If $x,y$ are vertices of $R$, then $x = \sum i, y = \sum j$ with each $i, j$ a vertex of $Q$.  The required quiver $R'$ for $B^e$ will have vertices of the form $(x,y) := \sum_{i,j}(i,j)$, a sum of vertices of $Q'$, as required in Condition $i.$  We may define the arrows of $R'$ to be $(\beta, x) := \sum_{i}(\beta,i)$ and $(x,\beta) := \sum_i(i,\beta)$, with $x = \sum i$ a vertex of $R$ and $\beta$ an arrow of $R$.  Condition $ii.$ is now easily checked, as is the final condition on loops.
\end{proof}

\begin{remark}

The principal obstruction to the construction of deeper examples of proj-bounded extensions seems to arise from the surprisingly small amount of literature on the subject of relative homological algebra for associative algebras.  By contrast, an enormous amount of work has been done on relative homological algebra for finite groups -- such results are hugely influencial in the modular representation theory and block theory of finite groups (see for instance \cite{Benson, Lin1, Lin2}).  We are not the first to suggest that the development of a robust relative homological algebra for associative algebras may yield great rewards.
\end{remark}

\subsection{Relative homological algebra over pseudocompact algebras} \label{Section. RelHomPC}

\subsubsection{Hochschild homology for pseudocompact algebras}

Hochschild cohomology for coalgebras was introduced by Doi in \cite{DoiHom}. Using the standard duality between coalgebras and pseudocompact algebras 
(see, for instance, \cite[Theorem 3.6]{Simson11}) one can thus define Hochschild homology for pseudocompact algebras by dualizing these definitions.  We choose to work directly with pseudocompact algebras, as it is not any harder.  Given a pseudocompact algebra $A$ and pseudocompact bimodule $M$ (which recall from Section \ref{Section pc algebras} is the same thing as a pseudocompact $A^e = A\ctens A^{\rm op}$-module) we define Hochschild homology with coefficients in $M$ as 
 $$
     \HH_*(A,M)=\Tor_*^{A^e}(M,A).
 $$
The Hochschild homology of $A$ is defined as 
$$
 \HH_*(A):=\HH_*(A,A)=\Tor_*^{A^e}(A,A).
 $$ 
 As with abstract algebras, the Hochschild homology of $A$ can be computed using the so-called standard Hochschild resolution: for a pseudocompact algebra $A$ consider the positively graded $A$-bimodule $C_{*}^{\prime}(A)$ defined for $q \in \{1,2,\hdots\}$ by
 $$
 C_{q}^{\prime}(A)=A \ctens A^{\ctens q} \ctens A.
 $$
 For $q=0$ set $C_{0}^{\prime}(A)=A \ctens A .$ The vector space $C_{q}^{\ctens}(A)$ is a pseudocompact $A$-bimodule for each $q$. Now  define $d: C_{q}^{\prime}(A) \rightarrow C_{q-1}^{\prime}(A)$ on pure tensors by
 $$
 d_{i}\left(a_{0} \ctens a_{1} \ctens \cdots \ctens a_{q} \ctens a_{q+1}\right)=\sum_{i=0}^q(-1)^i a_{0} \ctens \cdots\ctens a_{i} a_{i+1} \ctens\cdots \ctens a_{q+1}.
 $$
 The maps $d: C_{q}^{\prime}(A) \rightarrow C_{q-1}^{\prime}(A)$ are continuous morphisms of $A$-bimodules satisfying $d^2=0$. Define also $s$ :
 $C_{q}^{\prime}(A) \rightarrow C_{q+1}^{\prime}(A)
 $ by
 $$
 s\left(a_{0} \ctens a_{1} \ctens \cdots \ctens a_{q} \ctens a_{q+1}\right)=1 \ctens a_{0} \ctens a_{1} \ctens \cdots \ctens a_{q} \ctens a_{q+1}.
 $$
 One checks that $ds+sd=\mathrm{id}$, and hence the complex $\left(C_{*}^{\prime}(A), b^{\prime}\right)$ is a resolution of $A$ by free $A$-bimodules. 

 By definition of the Tor groups, $H_{*}(A, M)$ are the homology groups of the chain complex
 $$
 \left(M \ctens_{A \ctens A^{\text {op }}} C_{*}^{\prime}(A), \text { id } \ctens d \right).
 $$
 One can simplify these complexes using the isomorphism
 $$
 \varphi: M \ctens_{A \ctens A^{\text{op}}} C_{q}^{\prime}(A) \rightarrow C_{q}(A, M):=M \ctens A^{\ctens q}
 $$
 defined by $\varphi\left(m \ctens a_{0} \ctens a_{1} \ctens \cdots \ctens a_{q} \ctens a_{q+1}\right)=a_{q+1} m a_{0} \ctens a_{1} \ctens \cdots \ctens a_{q}$. Passing the
 differential $d$ through these isomorphisms,  we obtain the differential $b: C_{q}(A, M) \rightarrow C_{q-1}(A, M)$ given by
 $$
 \begin{aligned}
 b\left(m \ctens a_{1} \ctens \cdots \ctens a_{q}\right)=& m a_{1} \ctens \cdots \ctens a_{q} \\
 &+\sum_{i=1}^{q-1}(-1)^{i} m \ctens a_{1} \ctens \cdots \ctens a_{i} a_{i+1} \ctens \cdots \ctens a_{q} \\
 &+(-1)^{q} a_{q} m \ctens a_{1} \ctens \cdots a_{q-1}.
 \end{aligned}
 $$

 The following proposition shows that presenting any pseudocompact algebra $A$ as an inverse limit of finite dimensional algebras $A=\invlim A_i$, its Hochschild homology can be calculated via the usual Hochschild homology of the algebras $A_i$.

 \begin{prop}\label{prop.HomologyLimit}
 Suppose that $A$ is a pseudocompact algebra. The graded vector space $\HH_*(A)$ can be calculated as 
 \begin{equation}\label{eq.HHinvLim}
     \HH_*(A)=\invlim_{I} \HH_*(A/I),
 \end{equation}
 where $I$ runs through the open ideals in $A$.
 \end{prop}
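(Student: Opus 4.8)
The plan is to realise the complex computing $\HH_*(A)$ as the inverse limit, over the open ideals $I$ of $A$, of the Hochschild complexes of the finite-dimensional quotients $A/I$, and then to commute that inverse limit past homology. Write $(C_*(A),b)$ for the complex described just above, so that $C_q(A)=A\ctens A^{\ctens q}$, and write $(C_*(A/I),b)$ for the analogous complex of $A/I$, with $C_q(A/I)=(A/I)^{\otimes(q+1)}$; since each $A/I$ is finite-dimensional and discrete, its completed tensor powers coincide with its ordinary tensor powers, so $(C_*(A/I),b)$ is at once the classical and the pseudocompact Hochschild complex of $A/I$, and $\HH_*(A/I)$ is its homology, which is finite-dimensional in each degree. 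For open ideals $I\subseteq J$ the quotient map $A/I\to A/J$ is a continuous algebra homomorphism and so induces a chain map $C_*(A/I)\to C_*(A/J)$; these are compatible, making $\{C_*(A/I)\}_I$ an inverse system of complexes of pseudocompact vector spaces (indexed by the open ideals ordered by reverse inclusion).

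The heart of the argument is the identification $C_*(A)\iso\invlim_I C_*(A/I)$ as complexes. Taking the canonical presentation $A=\invlim_I A/I$ and using that the completed tensor product is, by construction, an inverse limit of its finite-dimensional pieces, one has
$$C_q(A)=A\ctens A^{\ctens q}=\invlim_{I_0,\dots,I_q}(A/I_0)\otimes_k\cdots\otimes_k(A/I_q),$$
and the diagonal sub-poset $\{(I,\dots,I)\}$ is cofinal in the index poset (given a tuple, replace all of its ideals by their intersection, again an open ideal), so the right-hand side collapses to $\invlim_I(A/I)^{\otimes(q+1)}=\invlim_I C_q(A/I)$. The differential $b$ on $C_*(A)$ is assembled from the multiplication map $A\ctens A\to A$ together with identity maps, each of which is the inverse limit over $I$ of its counterpart for $A/I$; hence $b=\invlim_I b_{A/I}$, and the identification is one of complexes of pseudocompact vector spaces.

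Finally, the category of pseudocompact vector spaces is abelian with exact inverse limits (Section \ref{Section pc algebras}), so $\invlim_I$ commutes with kernels and images, hence with homology, giving
$$\HH_q(A)=H_q\big(C_*(A)\big)=H_q\big(\invlim_I C_*(A/I)\big)=\invlim_I H_q\big(C_*(A/I)\big)=\invlim_I\HH_q(A/I),$$
as required (the terms on the right being finite-dimensional, so that the limit is again pseudocompact). The one genuinely load-bearing step -- the one I would flag as the potential obstacle -- is this exactness of $\invlim$: for an arbitrary inverse system of abstract vector spaces a $\lim^{1}$ term could obstruct the passage to homology, but no such term arises here precisely because we are working inside the category of pseudocompact modules. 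The remaining ingredients -- the cofinality reduction to the diagonal and the compatibility of the differentials -- are routine bookkeeping.
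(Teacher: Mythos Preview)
Your proof is correct and follows essentially the same approach as the paper: realise the Hochschild complex of $A$ as the inverse limit of the Hochschild complexes of the $A/I$, and then use exactness of inverse limits in the pseudocompact category to pass to homology. The paper's version merely sketches this, observing that algebra maps induce chain maps and then citing the argument of \cite[Proposition 6.5.7]{RZ} (for profinite group homology) for the passage from complexes to homology; you have written out that argument explicitly, including the cofinality step for the diagonal and the reason $\invlim$ commutes with homology.
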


 \begin{proof}
 This result follows from a sequence of checks so we just give a sketch.  First observe that a morphism of algebras $\rho:A_j\to A_i$ induces a morphism of complexes $C_{*}^{\prime}(A_j) \to C_{*}^{\prime}(A_i)$, and hence a map on their homologies.  It follows that presenting $A$ as the inverse limit of the inverse system $\{A/I, \rho_{IJ}:A/J\to A/I\}$, we obtain an inverse system of graded vector spaces $\HH_*(A/I)$.

 The argument used to prove \cite[Proposition 6.5.7]{RZ} shows that the inverse limit of this inverse system is $\HH_*(A)$. 
 \end{proof}

 It is important to observe that the maps of the above inverse system of $\tn{HH}_*(A/I)$ are not necessarily surjective.  Indeed, $\HH_n(A)$ can be zero even if each $\HH_n(A_i)$ is non-zero.  For instance, taking $A=k[[x]]$ we have that $k[[x]]=\invlim k[x]/x^i$. For each $i$ the homology of $A_i=k[x]/x^i$ is non-zero in every positive degree, while $\HH_*(k[[x]])$ is concentrated in degrees $0$ and $1$, as the following example shows.

 \begin{example} 
Consider $A=k[[x_1,x_2,\dots,x_n]]$. One can mimic  the arguments from \cite[Exercise 9.1.3]{Weibel} to calculate $\HH_*(A)$.
We obtain
 $$\HH_*(k[[x_1,x_2,\dots,x_n]])\cong k[[x_1,x_2,\dots,x_n]]\,\ctens\,\Lambda^*(k^n),
 $$
 where $\Lambda^*(k^n)$ is the exterior algebra of the vector space $k^n$.  In particular,  $\HH_*(k[[x_1,x_2,\dots,x_n]])$ is non-zero in degrees $0,\hdots,n$ and zero otherwise.

For finite dimensional algebras, a result of Keller \cite[Theorem 2.2]{Keller98} says that if the global dimension of a finite dimensional algebra $A$ is finite, then $\tn{HH}_*(A)$ is supported in degree $0$.  These examples show that this is not the case for pseudocompact algebras, as the power series ring $k[[x_1,x_2,\dots,x_n]]$ has global dimension $n$ by \cite[Theorem 1.12]{AusBuch58}.
 \end{example}

\subsubsection{Relative Hochschild homology for pseudocompact algebras}

We modify the content of Section \ref{subsection abstract rel proj definitions} in the obvious way so it makes sense for  pseudocompact algebras and modules. If $A$ is a pseudocompact algebra and $B$ is a closed subalgebra of $A$, we say that $B\subseteq A$ is an extension of pseudocompact algebras.

\begin{lemma}\label{lemma pc relproj char}
Let $M$ be a pseudocompact $A$-module.  The following are equivalent:
\begin{enumerate}
    \item $M$ is isomorphic to a continuous direct summand of the induced $A$-module $A\ctens_B V$, for some pseudocompact $B$-module $V$.
    \item Any continuous surjective $A$-module homomorphism $f\colon U\to M$ that splits continuously as a $B$-module homomorphism, also splits continuously as an $A$-module homomorphism.  
\end{enumerate}
\end{lemma}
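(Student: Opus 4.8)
The plan is to mimic the proof of the analogous result for abstract algebras (the equivalence recalled at the start of Section \ref{subsection abstract rel proj definitions}), being careful at each step to keep track of the topology, so that all the maps produced are continuous and all the direct sum decompositions are \emph{continuous} in the sense defined in Section \ref{Section pc algebras} (i.e.\ the complement is closed, equivalently the projection is continuous). Throughout I would use the standard facts recalled earlier: that $A\ctens_B -$ is left adjoint to restriction $\mathrm{Hom}_A(A,-)$, that $A\ctens_B V\iso V$ when we view $A$ as an $A$-$B$-bimodule and restrict, that the category of pseudocompact $A$-modules is abelian with exact inverse limits, and that a continuous surjection with closed kernel which is split as a map of sets of modules is, in the relevant categories, split by a continuous map.

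First I would prove (1)$\Rightarrow$(2). Suppose $M$ is a continuous direct summand of $A\ctens_B V$, and let $f\colon U\to M$ be a continuous surjection of $A$-modules which splits continuously over $B$, say via $\sigma\colon M\to U$ a continuous $B$-module section. It suffices to treat the case $M = A\ctens_B V$ itself, since a continuous $A$-splitting over a summand $A\ctens_B V$ restricts/corestricts through the continuous idempotent to give one for $M$. For $M = A\ctens_B V$, consider the continuous $B$-module map $V\to A\ctens_B V$, $v\mapsto 1\ctens v$; composing with $\sigma$ gives a continuous $B$-map $V\to U$, which by the adjunction $A\ctens_B - \dashv \mathrm{Hom}_A(A,-)$ (equivalently, by the universal property of induction in the pseudocompact setting) extends to a continuous $A$-module map $\widetilde\sigma\colon A\ctens_B V\to U$; one checks $f\circ\widetilde\sigma = \mathrm{id}$ on the generators $1\ctens v$ and hence everywhere by continuity and $A$-linearity. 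So $f$ splits continuously as an $A$-map.

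Next I would prove (2)$\Rightarrow$(1). Take any pseudocompact $A$-module $M$ satisfying (2). The counit of the adjunction gives a natural continuous $A$-module map $\varepsilon\colon A\ctens_B M\to M$, $a\ctens m\mapsto am$, which is surjective (since $1\ctens m\mapsto m$). I claim $\varepsilon$ splits continuously over $B$: the map $M\to A\ctens_B M$, $m\mapsto 1\ctens m$, is a continuous $B$-module homomorphism (here $A\ctens_B M$ is regarded as a $B$-module via the left $B$-action on the first factor, and $1\ctens bm = b\ctens m$) and is a section of $\varepsilon$. Moreover its kernel is the closed submodule generated by elements $ab\ctens m - a\ctens bm$ and the decomposition is continuous because the section is continuous. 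Applying hypothesis (2) to $\varepsilon$, we get a continuous $A$-module section, exhibiting $M$ as a continuous direct summand of $A\ctens_B M$, which is an induced module. This is exactly condition (1).

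The main obstacle I anticipate is purely topological bookkeeping rather than conceptual: ensuring that the section produced in each direction is genuinely \emph{continuous} and that the resulting direct sum decomposition is continuous (closed complement). In the abstract setting the splitting maps come for free once they exist set-theoretically; here one must invoke that the pseudocompact module categories are well behaved under the adjunction $A\ctens_B-\dashv\mathrm{Hom}_A(A,-)$ (so the induced $A$-map $\widetilde\sigma$ is automatically continuous) and that a continuous $A$-epimorphism with a continuous $B$-section has image of the $A$-section closed. I would also want to double-check the case-reduction from a direct summand to the full induced module respects continuity of the idempotent, but since "continuous direct summand" was defined precisely so that the idempotent projection is continuous, this causes no trouble. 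No serious inverse-limit argument should be needed, though if one prefers one can reduce everything to finite levels $A/I$ and pass to the limit using exactness of inverse limits.
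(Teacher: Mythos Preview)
Your proposal is correct and follows essentially the same approach as the paper: both directions use the induction--restriction adjunction, with (2)$\Rightarrow$(1) handled identically via the multiplication map $A\ctens_B M\to M$ and its $B$-section $m\mapsto 1\ctens m$. For (1)$\Rightarrow$(2) the paper treats the general summand case in one step by writing the candidate splitting as $\varepsilon_U(1\ctens \gamma\pi\eta_V)\iota$ and verifying $f\gamma'=\tn{id}_M$ purely formally via naturality of $\varepsilon$ and the triangle identities, whereas you first reduce to $M=A\ctens_B V$ and then check the identity on elements $1\ctens v$; the paper explicitly remarks that in the pseudocompact setting it is cleaner to argue formally rather than with elements of the completed tensor product, but your density-plus-continuity justification is also valid.
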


\begin{proof}
That 2 implies 1 is easy: the multiplication map $A\ctens_B M\to M$ clearly splits as a $B$-module homomorphism, via the continuous map sending $m$ to $1\ctens m$.  So by 2, $M$ is isomorphic to a continuous direct summand of $A\ctens_B M$ and 1 holds.

To prove that 1 implies 2, our situation is as follows
 $$\xymatrix{
   & A\ctens_B V \ar@/_/[d]_{\pi} \\
 U\ar@/_/[r]_{f} & M\ar@/_/[u]_{\iota}\ar@/_/@{-->}[l]_{\gamma}}$$
 where solid arrows are continuous $A$-module homomorphisms, dashed arrows are continuous $B$-module homomorphisms, and $f\gamma = \pi\iota = \tn{id}_M$.  We are looking for a continuous $A$-module homomorphism $\gamma':M\to U$ such that $f\gamma' = \tn{id}_M$. With abstract modules, the calculation can be done with elements, but with the completed tensor product it is easier to work formally.  Denote by $\varepsilon, \eta$ the counit and unit of the induction-restriction adjunction described in Section \ref{Section pc algebras}.  The map $\delta = \varepsilon_U(1\ctens \gamma\pi\eta_V)$ is an $A$-module homomorphism from $A\ctens_B V$ to $U$ and we claim that $\gamma' = \delta\iota$ is the required splitting of $f$.  By naturality of $\varepsilon$, functoriality of induction, naturality of $\varepsilon$ again and the counit-unit equations, we have
 \begin{align*}
     f\gamma' & = f\varepsilon_U(1\ctens \gamma\pi\eta_V)\iota \\
     & = \varepsilon_M(1\ctens \pi\eta_V)\iota \\
     & = \varepsilon_M(1\ctens\pi)(1\ctens\eta_V)\iota \\
     & = \pi\varepsilon_{A\ctens_BV}(1\ctens\eta_V)\iota \\
     & = \pi\iota \\
     & = \tn{id}_M.
 \end{align*}
\end{proof}

We say that $M$ is \emph{relatively $B$-projective} or \emph{$(A,B)$-projective} if it satisfies the conditions of Lemma \ref{lemma pc relproj char}.

Now just as with abstract modules one can construct $(A,B)$-projective resolutions of pseudocompact modules and define relative $\Tor_*^{(A,B)}$ in direct analogy with the abstract case.
The $B$-relative Hochschild  homology of the pseudocompact algebra $A$ with coefficients in the pseudocompact $A$-bimodule $M$ is
   $$\HH_*(A\mid B, M)=\Tor_*^{(A^e,B^e)}(M,A).$$
As in abstract case, the above definition is equivalent to the one using $(A^e,B\ctens A^{op})$-projective resolutions.

\section{Proj-bounded extensions}\label{Section projbounded}

\subsection{Definitions and first examples}

We define the extensions of interest to us and provide several examples.

\begin{defn}\label{def proj bounded}
We say that the extension of $k$-algebras $B\subseteq A$ is \emph{proj-bounded} if it satisfies the following three conditions:
 \begin{enumerate}
     \item $A/B$ is of finite projective dimension as a $B^e$-module
     \item $A/B$ is projective as either a left or a right $B$-module.
     \item There exists a natural number $p\geqslant 1$ (called the \textit{index of projectivity}) such that the tensor power $A/B^{\otimes_B n}$ is projective as a $B^e$-module, for any $n\geqslant p$.
\end{enumerate}
We say that the extension is \emph{strongly proj-bounded} if it satisfies the additional condition that
\begin{itemize}
    \item[4.] $A$ has finite $(A^e,B^e)$-projective dimension.
\end{itemize}
\end{defn}

The definitions of proj-bounded and strongly proj-bounded extensions of pseudocompact algebras are the same: one must only replace abstract tensor products $\otimes_k, \otimes_B$ with completed tensor products $\ctens_k, \ctens_B$ throughout.

\begin{example}
A bounded extension of algebras is clearly proj-bounded, since if $A/B^{\otimes_B p} = 0$, then $A/B^{\otimes_B n} = 0$ is projective for any $n\geqslant p$.  A bounded extension is in fact strongly proj-bounded: this follows from the $(A^e,B^e)$-projective resolution
\cite[Theorem 2.3]{CLMS20-} of $A$, whose length is at most $p$ when $A/B^{\otimes_B p} = 0$. 
\end{example}

 \begin{example}
The motivating example for the development of bounded extensions comes from \cite{CLMS20}.  The authors begin with a finite dimensional bounded path algebra $B = kQ/I$ ($Q$ a finite quiver and $I$ an admissible ideal) and add to $Q$ a set $F$ of what they call ``inert arrows''.  If the induced algebra $B_F$ (see \cite[Definition 3.3]{CLMS20}) is finite dimensional, then the extension $B\subseteq B_F$ is bounded.  Applying the same procedure to completed path algebras and arbitrary sets $F$ of arrows, the corresponding extension $B\subseteq B_F$ will of course not be bounded, but it is strongly proj-bounded: the extension is proj-bounded because $M = \prod_{a\in F}Bt(a)\ctens_k s(a)B$ is a projective pseudocompact $B$-bimodule, while Condition 4 of Definition \ref{def proj bounded} is satisfied because the finite sequence (2.1) from \cite{CLMS20} remains a relative projective resolution for $B_F$ as a $B^e$-module.
 \end{example}
 
 \begin{example}
 The smallest example of an extension that is proj-bounded but not strongly proj bounded is $A = k[x]/x^2, B = k$.  The conditions of a proj-bounded extension are trivially satisfied because every $k$-bimodule is projective.  But $A$ has infinite ($B^e$-relative) projective dimension, so the extension is not strongly proj-bounded. 
 \end{example}

 \begin{example}
We give an example demonstrating that a tensor power of $A/B$ can be projective as a bimodule without $A/B$ being projective as a bimodule.  Again we work with the completed path algebra.  Let $Q$ be the following quiver:
$$\xymatrix{
4 & 3\ar[l]^{\delta} & 2\ar[l]^{\gamma}\ar@(ru,lu)[]_{\beta} & 1\ar[l]^{\alpha}
.}$$
We introduce a notation that we will use also in other examples.  For any vertex $e$ of any quiver $Q$, denote by $S_e$ (resp.\ $\widetilde{S_e}$) the simple left (resp.\ right) $B = k\db{Q}$-module at vertex $e$, and by $P_e$ (resp.\ $\widetilde{P_e}$) the projective left (resp.\ right) $B$-module at vertex $e$.

Returning to the specific example, consider the $B = k\db{Q}$-bimodule $M = X\oplus P$, where
$$X = S_3\ctens_k \widetilde{S_1}\,,\, P = P_2\ctens_k \widetilde{P}_2.$$
Then $P$ is projective as a $B$-bimodule, while $X$ is projective as a right $B$-module but not as a left $B$-module.  From this setup there are two obvious algebras one may construct:
\begin{itemize}
    \item $A' = B\oplus M$, the trivial extension algebra of $B$ by $M$ (see \cite[Chapter III.2]{ARS97}).  We check the conditions to conclude that $B\subseteq A'$ is proj-bounded:
    \begin{enumerate}
        \item To see that $A'/B\iso M$ has finite projective dimension as a $B^e$-module, it is enough to check that $X$ has finite projective dimension as a $B$-bimodule.  But the projective cover of $X$ is $P_3\ctens_k \widetilde{P_1}$ and its kernel is isomorphic to $S_4\ctens \widetilde{S_1}$, which is projective as a bimodule.
        
        \item Immediate, since both $X$ and $P$ are projective as right $B$-modules.
        
        \item $M$ is not projective as a bimodule, but 
        $$M\ctens_B M \iso \cancel{X\ctens_B X}\oplus \cancel{X\ctens_B P}\oplus \cancel{P\ctens_B X} \oplus P\ctens_B P = P\ctens_B P$$
        is a projective $B$-bimodule, and similarly with higher tensor powers.
    \end{enumerate}
    The extension $B\subseteq A'$ is however not strongly proj-bounded.

    \item $A = T_B\db{M}$.  Similar checks to those above show that the extension $B\subseteq A$ is proj-bounded.  This extension is strongly proj-bounded: the exact sequence from the proof of Theorem 2.5 in \cite{CLMS20}, interpreted for pseudocompact algebras, shows that the relative projective dimension of a completed tensor algebra is finite.
\end{itemize}

\end{example}

\subsection{Finite dimensional strongly proj-bounded extensions}

There are strongly proj-bounded extensions $B\subseteq A$ when $A$ is finite dimensional, that are not bounded.  We present a class of examples.  The results apply to more general pseudocompact algebras, but we prove them for finite dimensional algebras to maintain focus.  We first describe a construction yielding proj-bounded extensions (Lemma \ref{lemma matrix AoverB proj as bimod}) and then ``intersect'' this construction with the construction of Corollary \ref{corol weakly acyclic has finite relgd} to obtain a class of finite dimensional strongly proj-bounded extensions.  We provide an explicit example of such an extension.

\medskip

Let $A_1, \widebar B, A_2$ be finite dimensional algebras.  Let $M_1$ be a finitely generated $\widebar B-A_1$-bimodule and $M_2$ be a finitely generated $A_2-\widebar B$-bimodule.  Let $M_{21}$ be an $A_2-A_1$-bimodule, together with a bimodule homomorphism $\rho : M_2\otimes_{\widebar B} M_1 \to M_{21}$.
Define $A$ to be the algebra 
$$A = \begin{pmatrix}A_2 & M_2 & M_{21} \\
0 & \widebar B & M_1 \\
0 & 0 & A_1\end{pmatrix}$$
with the obvious multiplication: that is, first multiply the matrices and then interpret the entries of the product matrix in the natural way -- if $m_1\in M_1, m_2\in M_2$ then $m_2\cdot m_1$ is interpreted as $\rho(m_2\otimes m_1)$ in $M_{21}$.  One easily checks that $A$ is an associative algebra.
Denote by $E$ the idempotent $\begin{pmatrix}1_{A_2} & 0 & 0 \\
0 & 0 & 0 \\
0 & 0 & 1_{A_1}\end{pmatrix}$ and by $B$ the subalgebra 
$$\begin{pmatrix}0 & 0 & 0 \\
0 & \widebar B & 0 \\
0 & 0 & 0\end{pmatrix} \times \langle E\rangle$$
of $A$.

\begin{lemma}\label{lemma matrix AoverB proj as bimod}
With the setup as above, $A/B$ is projective as a $B$-bimodule if, and only if, $M_1$ is projective as a left $\widebar B$-module and $M_2$ is projective as a right $\widebar B$-module.
\end{lemma}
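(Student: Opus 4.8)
The plan is to compute $A/B$ explicitly as a $B$-bimodule and read off when it is projective. As a $k$-vector space, $A/B$ is the quotient of the matrix algebra by the subalgebra $B$, so it decomposes as
$$A/B \;=\; M_2 \;\oplus\; M_{21} \;\oplus\; M_1 \;\oplus\; \left(A_2/k\cdot 1_{A_2}\right) \;\oplus\; \left(A_1/k\cdot 1_{A_1}\right),$$
where $M_2$ sits in the $(1,2)$-slot, $M_{21}$ in the $(1,3)$-slot, $M_1$ in the $(2,3)$-slot, and the remaining summands come from the diagonal blocks $A_2, A_1$ modulo the scalars retained in $B$. First I would record the left and right $B$-actions on each of these summands: writing $e_1, e_2, e_3$ for the three diagonal idempotents of $B$ (so $e_2 = 1_{\widebar B}$ and $e_1 + e_3 = E$), one sees that $B$ acts on $M_1$ on the left through $\widebar B = e_2 B e_2$ and on the right through $k = e_3 B e_3$; symmetrically $B$ acts on $M_2$ on the right through $\widebar B$ and on the left through $k = e_1 B e_1$; and $M_{21}$, together with the diagonal pieces, is acted on only by the semisimple corner $e_1 B e_1 \times e_3 B e_3 \cong k\times k$ on both sides.

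Next I would identify the indecomposable projective $B$-bimodules, i.e. the indecomposable projective left $B^e$-modules. Since $B = \widebar B \times (k\times k)$ as an algebra, $B^e$ decomposes accordingly, and the indecomposable projective $B$-bimodules are exactly the modules $Bf \otimes_k gB$ for $f, g$ primitive idempotents of $B$. Among these, the ones relevant here are $\widebar B \otimes_k \widebar B$ (projective over $e_2 B e_2$ on both sides), $e_1 B e_1 \otimes_k \widebar B = k\otimes_k \widebar B$, $\widebar B \otimes_k k = \widebar B \otimes_k e_3 B e_3$, and the various $k\otimes_k k$ pieces supported on the corners $e_1, e_3$. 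Now the claim follows by matching: the summand $M_1$, with its left $\widebar B$-action and right $k$-action, is a $B$-bimodule supported at $(e_2, e_3)$, hence projective as a $B$-bimodule if and only if it is a direct summand of a product of copies of $\widebar B\otimes_k k$, which happens precisely when $M_1$ is projective as a left $\widebar B$-module (the right action being over a field, it contributes nothing). Symmetrically, $M_2$ is projective as a $B$-bimodule iff it is projective as a right $\widebar B$-module. The remaining summands $M_{21}$, $A_2/k$, $A_1/k$ are bimodules over the semisimple corner $k\times k$ on both sides, hence automatically projective $B$-bimodules with no hypothesis needed; in particular they never obstruct projectivity. Assembling: $A/B$ is projective as a $B$-bimodule if and only if each of its summands is, which reduces exactly to $M_1$ projective on the left over $\widebar B$ and $M_2$ projective on the right over $\widebar B$.

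The main obstacle, and the only point requiring genuine care, is the bookkeeping of the two-sided $B$-action on each summand of $A/B$ — in particular verifying that the ``mixed'' actions really do factor through the semisimple corners $k\times k$ as claimed (so that, e.g., the right $\widebar B$-module structure one would naively expect on $M_1$ is not actually present, because in $A$ the entries of $M_1$ are annihilated on the right by $\widebar B \subseteq B$ only insofar as $\widebar B$ sits in the $(2,2)$-block, whereas the right action on the $(2,3)$-entry comes from the $(3,3)$-block). Once this is pinned down, the identification of projective $B$-bimodules with the $Bf\otimes_k gB$ and the observation that tensoring over a field preserves and reflects projectivity make both directions immediate. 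I would also note explicitly that ``projective as a left $B^e$-module'' for these corner-supported summands uses only that $k$ is a field, so no finiteness or admissibility hypothesis is needed there.
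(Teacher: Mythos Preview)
Your argument is correct and takes a more structural route than the paper's. You use the algebra decomposition $B \cong \widebar B \times k \times k$ to Peirce-decompose any $B$-bimodule along the idempotents $e_1, e_2, e_3$, reducing projectivity of $A/B$ to projectivity of each piece over the corresponding corner of $B^e$; the only pieces not living over a semisimple corner are $e_2(A/B)e_3 = M_1$ (governed by left $\widebar B$-projectivity) and $e_1(A/B)e_2 = M_2$ (governed by right $\widebar B$-projectivity), so both implications fall out simultaneously. The paper instead argues the two directions separately and more concretely: for the forward direction it picks complements $T_1, T_2$ with $M_i \oplus T_i$ free over $\widebar B$ and explicitly writes $A/B \oplus T_1 \oplus T_2$ as a direct sum of the projective bimodules $P_E \otimes_k \widetilde{P_E}$, $P_E \otimes_k B$, and $B \otimes_k \widetilde{P_E}$; for the converse it projects onto the $M_1$ and $M_2$ summands. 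Your approach is cleaner in that it handles both directions at once and makes transparent why $M_{21}$ and the diagonal pieces never obstruct projectivity; the paper's approach has the minor advantage of exhibiting the ambient projective bimodule explicitly. One small point of care: your decomposition uses that $e_1$ and $e_3$ lie in $B$ separately (so the corner is $k\times k$ rather than $kE$); this is the correct reading of the matrix defining $B$, and is also what makes the paper's own dimension count in its proof work.
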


\begin{proof}
Suppose that $M_1, M_2$ are projective as left and right $\widebar B$-modules, respectively, so there are left and right $\widebar B$-modules $T_1, T_2$ respectively such that $M_1\oplus T_1 = \widebar B^n$ (as a left $\widebar B$-module) and $M_2\oplus T_2 = \widebar B^m$ (as a right $\widebar B$-module).  Then $\begin{pmatrix}0 & T_2 & 0 \\
0 & 0 & T_1 \\
0 & 0 & 0\end{pmatrix}$ is a $B$-bimodule and we have a decomposition of $B$-bimodules as follows:
\begin{align*}
 &    A/B \oplus 
    \begin{pmatrix}0 & T_2 & 0 \\
0 & 0 & T_1 \\
0 & 0 & 0\end{pmatrix} \\ 
& = \begin{pmatrix}A_2 & \widebar B^m & M_{21} \\
0 & 0 & \widebar B^n \\
0 & 0 & A_1\end{pmatrix} /\langle E\rangle
 \\
& =  \begin{pmatrix}A_2 & 0 & M_{21} \\
0 & 0 & 0 \\
0 & 0 & A_1\end{pmatrix} /\langle E\rangle \oplus \begin{pmatrix}0 & \widebar B^m & 0 \\
0 & 0 & 0 \\
0 & 0 & 0\end{pmatrix}\oplus \begin{pmatrix}0 & 0 & 0 \\
0 & 0 & \widebar B^n \\
0 & 0 & 0\end{pmatrix}.
\end{align*}
The first summand is a direct sum of copies of the projective bimodule $P_E\otimes_k \widetilde{P_E}$, the second is a direct sum of $m$ copies of the projective bimodule $P_E\otimes_k \widebar{B}$, and the third is a direct sum of $n$ copies of the projective bimodule $\widebar{B}\otimes_k\widetilde{P_E}$.

For the converse, observe that 
the condition that $A/B$ be projective as a $B$-bimodule implies that both 
$$
\begin{pmatrix}0 & M_2 & 0 \\
0 & 0 & 0 \\
0 & 0 & 0\end{pmatrix},\quad 
\begin{pmatrix}0 & 0 & 0 \\
0 & 0 & M_1 \\
0 & 0 & 0\end{pmatrix},\quad 
$$
are projectives $B$-bimodules, so that in particular $M_2$ is a projective right $\widebar B$-module and $M_1$ is a projective left $\widebar B$-module.
\end{proof}

Now, using the previous lemma jointly with Corollary \ref{corol weak acyclic finite relgldim as bimod} we obtain a class of strongly proj-bounded extensions.  

\begin{prop}\label{prop BA1A2 proj bounded extension example}
Let $X_1, Q, X_2$ be quivers with $X_1, X_2$ acyclic and $Q$ weakly triangular.  Define the algebras $A_i = k[X_i]/I_i$ with $I_i$ an admissible ideal of $k[X_i]$ for $i = 1,2$ and $\overline{B} = k[Q]/I_Q$ for $I_Q$ an admissible ideal of $k[Q]$.  Defining the algebras $B,A$ as before Lemma \ref{lemma matrix AoverB proj as bimod}, the extension $B\subseteq A$ is strongly proj-bounded.
\end{prop}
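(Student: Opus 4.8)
The plan is to verify the four conditions of Definition~\ref{def proj bounded} in turn, with Conditions~1--3 coming essentially for free from Lemma~\ref{lemma matrix AoverB proj as bimod} and Condition~4 being the real content, to be extracted from Corollary~\ref{corol weak acyclic finite relgldim as bimod}. For Conditions~1--3: assuming (as in Lemma~\ref{lemma matrix AoverB proj as bimod}) that $M_1$ is projective as a left $\overline{B}$-module and $M_2$ is projective as a right $\overline{B}$-module, Lemma~\ref{lemma matrix AoverB proj as bimod} gives at once that $A/B$ is projective as a $B$-bimodule. Then Condition~1 is immediate ($\tn{pd}_{B^e}(A/B)=0$), as is Condition~2 (a projective bimodule is one-sided projective). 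For Condition~3 one takes index of projectivity $p=1$: if $A/B$ is a summand of a free $B^e$-module then, since $\otimes_B$ commutes with direct sums, $(A/B)^{\otimes_B n}$ is a summand of a direct sum of copies of $(B\otimes_k B)^{\otimes_B n}\iso B^{\otimes_k(n+1)}\iso (B\otimes_k B)^{\oplus(\dim_k B)^{n-1}}$, which is free over $B^e$; hence $(A/B)^{\otimes_B n}$ is $B^e$-projective for every $n\geqslant 1$.

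For Condition~4 I would prove the stronger statement $\tn{gldim}(A^e,B^e)<\infty$ via Corollary~\ref{corol weak acyclic finite relgldim as bimod}, which requires the pair $(A,B)$ to have the form of Corollary~\ref{corol weakly acyclic has finite relgd}. The first task is to identify the Gabriel quiver $\mathcal Q$ of $A$. Since $A_1,A_2,\overline B$ are basic finite dimensional and $\tn{rad}A=\tn{rad}(A_2)\oplus\tn{rad}(\overline B)\oplus\tn{rad}(A_1)\oplus M_1\oplus M_2\oplus M_{21}$ has semisimple quotient $\prod(A_i/\tn{rad})\times(\overline B/\tn{rad})$, the algebra $A$ is basic, so $A\iso k\mathcal Q/\mathcal I$ with $\mathcal I$ admissible. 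A radical-square computation in the upper-triangular matrix algebra $A$ --- the only products across blocks are $M_2M_1\subseteq M_{21}$, $M_1M_2=0$ and $M_i\cdot\tn{rad}(A_j)\subseteq M_i$, none of which reaches a diagonal block except through $\tn{rad}(A_i)^2$ or $\tn{rad}(\overline B)^2$ --- shows that $\mathcal Q_0$ is the disjoint union of three ``layers'', indexed by the vertices of $X_2$, of $Q$, and of $X_1$; that the arrows inside these layers are exactly the arrows of $X_2$, $Q$, $X_1$; and that every other arrow runs from the $X_1$-layer into the $Q$- or $X_2$-layer, or from the $Q$-layer into the $X_2$-layer. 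Because $X_1,X_2$ are acyclic and $Q$ is weakly acyclic, ordering $\mathcal Q_0$ layer by layer ($X_2$ first, then $Q$, then $X_1$) and acyclically, resp.\ weakly acyclically, inside each layer produces an ordering with no arrow $i\to j$ for $i<j$; moreover, as $X_1,X_2$ have no loops and no cross-layer arrow is a loop, every loop of $\mathcal Q$ is a loop of $Q$.

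It then remains to display $B$ in the form demanded by Corollary~\ref{corol weakly acyclic has finite relgd}. Inside $A\iso k\mathcal Q/\mathcal I$, the subalgebra $B=\overline B\times k\times k$ is generated by the vertices of $Q$, the arrows of $Q$, and the two idempotents $e',e''$, where $e'$ (resp.\ $e''$) is the sum of the vertices of $X_2$ (resp.\ $X_1$); the set $\{v:v\in Q_0\}\cup\{e',e''\}$ is a complete orthogonal set of idempotents of $A$ summing to $1_A$. Letting $R$ be the quiver with $R_0=Q_0\cup\{e',e''\}$ and $R_1=Q_1$, we then have that the vertices of $R$ are sums of vertices of $\mathcal Q$, that each $\beta\in R_1$ satisfies $\beta=j\beta i$ for vertices $i,j$ of $\mathcal Q$, and that $R$ contains every loop of $\mathcal Q$ (these being exactly the loops of $Q$, all of which lie in $Q_1$). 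Thus $(A,B)$ is as in Corollary~\ref{corol weakly acyclic has finite relgd}, so Corollary~\ref{corol weak acyclic finite relgldim as bimod} gives $\tn{gldim}(A^e,B^e)\leqslant 2|\mathcal Q_0|-2<\infty$; in particular $\tn{pd}_{(A^e,B^e)}A<\infty$, so Condition~4 holds and $B\subseteq A$ is strongly proj-bounded.

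I expect the main obstacle to be precisely the bookkeeping of the two middle steps: correctly reading off the Gabriel quiver of the matrix algebra $A$, confirming that its only loops come from $Q$, and checking the three bullet conditions on the generating quiver of $B$. None of this is deep, but the radical-square and loop computations must be carried out carefully; the structural fact that makes $\tn{gldim}(A^e,B^e)$ finite is that $\overline B=k[Q]/I_Q$ lies entirely within $B$, so that $B$ retains every loop of $\mathcal Q$.
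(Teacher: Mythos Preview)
Your approach matches the paper's exactly: Conditions~1--3 via Lemma~\ref{lemma matrix AoverB proj as bimod} (your explicit verification that tensor powers of a projective $B$-bimodule remain projective is a welcome addition, since the paper simply asserts proj-boundedness), and Condition~4 by verifying the hypotheses of Corollary~\ref{corol weakly acyclic has finite relgd} so that Corollary~\ref{corol weak acyclic finite relgldim as bimod} applies.

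There is, however, one concrete slip. You write $B=\overline B\times k\times k$ with two separate idempotents $e'$ (sum of the vertices of $X_2$) and $e''$ (sum of the vertices of $X_1$), but by the definition preceding Lemma~\ref{lemma matrix AoverB proj as bimod} the subalgebra $B$ is $\overline B\times\langle E\rangle$, where $E=e'+e''$ is the \emph{single} idempotent $\begin{pmatrix}1_{A_2}&0&0\\0&0&0\\0&0&1_{A_1}\end{pmatrix}$; thus $B\cong\overline B\times k$, and neither $e'$ nor $e''$ lies in $B$ on its own. Your quiver $R$ with vertex set $Q_0\cup\{e',e''\}$ therefore generates a strictly larger subalgebra than $B$, so as written you are proving the wrong extension is strongly proj-bounded. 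The fix is immediate: take $R_0=Q_0\cup\{E\}$ instead, which is precisely what the paper does. All three bullet conditions of Corollary~\ref{corol weakly acyclic has finite relgd} remain satisfied ($E$ is still a sum of vertices of $\mathcal Q$; the arrows of $R$ are unchanged; and your observation that every loop of $\mathcal Q$ is already a loop of $Q$ still handles the third bullet), so the remainder of your argument goes through verbatim.
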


\begin{proof}
By Lemma \ref{lemma matrix AoverB proj as bimod} the extension is proj-bounded, so we need only check that $A$ has finite $B$-relative projective dimension as an $A$-bimodule, which will follow from Corollary \ref{corol weak acyclic finite relgldim as bimod} once we check that the hypotheses of Corollary \ref{corol weakly acyclic has finite relgd} are satisfied for this extension.  Label the $n_2$ vertices of $X_2$ by $1,\hdots, n_2$ so that there are arrows from $i\to j$ only when $i$ is greater than or equal to $j$, now label the $n$ vertices of $Q$ by $n_2+1,\hdots,n_2+n$  so that there are arrows from $i\to j$ only when $i$ is greater than or equal to $j$, and finally label the $n_1$ vertices of $X_1$ by $n_2+n+1,\hdots, n_2+n+n_1$ so that there are arrows from $i\to j$ only when $i$ is greater than or equal to $j$.  One may directly check by multiplying matrices that this is a complete set of idempotents for $A$ and that whenever $i<j$ in this ordering then $jai = 0$ for any element $a$ of $A$, as required.  It remains to check the conditions on the subalgebra $B$.  
The vertices of the required quiver $R$ can be chosen to be the vertices of $Q$ together with the sum $\sum_{x\in (X_1)_0\cup (X_2)_0}x$, so the first property is satisfied. The arrows of $R$ can be chosen to be the arrows of $Q$, and hence the second property is satisfied. It remains to check that $R$ contains every loop of the quiver of $A$, which is equivalent to saying that for any vertex $i$ of $X_1$ or $X_2$, $iAi = 0$.  But this is clear, since
$$
\begin{pmatrix}0 & 0 & 0 \\
0 & 0 & 0 \\
0 & 0 & i\end{pmatrix}
A
\begin{pmatrix}0 & 0 & 0 \\
0 & 0 & 0 \\
0 & 0 & i\end{pmatrix}
= 
\begin{pmatrix}0 & 0 & 0 \\
0 & 0 & 0 \\
0 & 0 & iA_1i\end{pmatrix} = 0
$$
because $A_1$ is acyclic, and similarly with $i\in A_2$.
\end{proof}

\begin{remark}
Note that in these examples, $A/B$ is projective as a $B$-bimodule, which is stronger than we require for an extension to be proj-bounded.  As mentioned in Section \ref{Section Ext.finite.gldim}, a deeper understanding of relative homological algebra for associative algebras is likely to allow larger classes of examples.
\end{remark}

\begin{remark}
In terms of quivers, the extensions of Proposition \ref{prop BA1A2 proj bounded extension example} have the following form:
$$
\begin{tikzpicture}[scale = 0.8]

\draw (0,0) ellipse (1cm and 2cm);

\draw (0,0) node {$X_2$};
\draw (3,2) node {$Q$};
\draw (6,0) node {$X_1$};

\draw (3,2) ellipse (1cm and 1.4cm);

\draw (6,0) ellipse (1cm and 2cm);

\draw[->] (2.5,2.2) to[in = 50, out = 180] node[midway, above] {$\delta_1$} (0.4,1);
\draw (1.5,1.4) node {$\vdots$};
\draw[->] (2.5,1.5) to[in = 0, out = -130] 
node[midway, below] {$\delta_n$} (0.5,0.5);

\draw[->] (5.65,1) to[in = 0, out = 130] node[midway, above] {$\gamma_1$} (3.5,2.2) ;
\draw (4.5,1.4) node {$\vdots$};
\draw[->] (5.5,0.5) to[in = -50, out = 180] 
node[midway, below] {$\gamma_m$} (3.5,1.5) ;

\draw[->] (5.9,-1) to[in = 10, out = 170] node[midway, above] {$\varepsilon_1$} (0.2,-1) ;
\draw (3,-1.2) node {$\vdots$};
\draw[->] (5.9,-1.5) to[in = -15, out = 195] node[midway, below] {$\varepsilon_p$} (0.2,-1.5) ;
\end{tikzpicture}
$$
The ovals represent quivers.  The quivers $X_1, X_2$ are acyclic and $Q$ is weakly triangular.  The algebra $A_1$ is $k[X_1]/I_1$ for $I_1$ an admissible ideal of $X_1$ and similarly $A_2$ is $k[X_2]/I_2$ and $\overline{B}$ is $k[Q]/I_Q$.  The subalgebra $B$ is $\overline{B}\times \langle E\rangle$, where $E$ is the sum of the vertex idempotents of $X_1$ and of $X_2$.  The bimodules $M_1, M_2$ are 
generated by the $\gamma_i$ and by the $\delta_i$, respectively.  More general bimodules are possible, but projectivity of $M_1$ (resp.\ $M_2$) as a left (resp.\ right) $\overline{B}$-module (cf.\ Lemma \ref{lemma matrix AoverB proj as bimod}) can be guaranteed by defining
$$M_1 = \sum_{i\in\{1,\hdots,m\}}\overline{B}\gamma_iA_1 \iso \bigoplus_{i\in\{1,\hdots,m\}}\overline{B}t(\gamma_i)\otimes_k s(\gamma_i)A_1$$
$$M_2 = \sum_{i\in\{1,\hdots,n\}}A_2\delta_i\overline{B}\iso \bigoplus_{i\in\{1,\hdots,n\}}A_2t(\delta_i)\otimes_k s(\delta_i)\overline{B},$$
where $s(\alpha), t(\alpha)$ represent the source and target vertices of the arrow $\alpha$.  
The bimodule $M_{21}$ is a bimodule quotient of $M_2\otimes_{\overline{B}}M_1 + \sum_{i\in \{1,\hdots,p\}}A_2\varepsilon_iA_1$, with the only rule being that any path appearing in a relation must be of length at least $2$ (since otherwise the ideal of the path algebra defining $A$ will not be admissible).
\end{remark}

\begin{remark}
The algebras of this construction may be compared with the closely related ``linearly ordered pullbacks'' considered in \cite{CoelhoWagnerLOPs}.  When $M_{21}=0$ the algebras of the present construction are examples of linearly ordered pullbacks.  Not every linearly ordered pullback is of the form of this construction however, because after translating between languages, the $M_1, M_2$ coming from a linearly ordered pullback need not be projective as (left and right respectively) $\overline{B}$-modules.
\end{remark}

\begin{example}
Follows an explicit example.  Consider $A$ to be the path algebra of the quiver
$$\xymatrix{
1 & & 2\ar@(ul,ur)^{\beta_3}\ar[ll]_{\delta_1} & & 4\ar[lld]_<<<<<<<{\gamma_2} & 6\ar[l]_{\alpha_2} \ar@/_{35pt}/[lllll]_{\varepsilon_1} \\
  & & 3\ar@/^/[u]^{\beta_1}\ar@/_/[u]_{\beta_2}\ar[llu]_{\delta_2} & & 5\ar[u]_{\alpha_4}\ar[llu]^<<<<<<<{\gamma_1} & 7\ar[l]_{\alpha_3}\ar[u]_{\alpha_1} \ar@/^{30pt}/[lllllu]^{\varepsilon_2} 
}$$
modulo the admissible ideal $I$ generated by the relations 
$$\alpha_2\alpha_1 = \alpha_4\alpha_3\,,\,{\beta_3}^3 = 0\,,\,\beta_3\beta_1 = \beta_3\beta_2\,,\,\delta_1\beta_3\gamma_1\alpha_3 = \delta_2\gamma_2\alpha_2\alpha_1\,,\,\varepsilon_1\alpha_1 = \delta_1\gamma_1\alpha_3$$
and $B$ to be the subalgebra with basis
$$\{e_2,\, e_3,\, e_1+e_4+e_5+e_6+e_7,\,\beta_1,\, \beta_2,\, \beta_3,\, \beta_3\beta_1,\, {\beta_3}^2,\, {\beta_3}^2\beta_1\}.$$
Proposition  \ref{prop BA1A2 proj bounded extension example} shows that the extension $B\subseteq A$ is strongly proj-bounded.

\end{example}

\section{Jacobi-Zariski sequences for proj-bounded extensions}\label{Section. JZ sequences}

A key result from \cite{CLMS20Arx} (see also \cite{CLMS20ArxCor} for a corrected formulation and \cite{CLMS24} for a corrected proof) is what they call a ``Jacobi-Zariski almost exact sequence'' for an arbitrary extension of algebras.  The result is then applied to bounded extensions $B\subseteq A$ in \cite[Theorem 6.5]{CLMS20Arx} (see also \cite[Theorem 6.5]{CLMS20ArxCor} and \cite[Theorem 3.8]{CLMS24}) to provide an exact sequence that compares the Hochschild homologies of $B$ and $A$, via the $B$-relative Hochschild homology of $A$.  Their argument applies equally well to the wider class of proj-bounded extensions:

\begin{theorem}\label{JZ proj-bounded}
Let $B \subseteq A$ be a proj-bounded extension of $k$-algebras and let $X$ be an $A$ bimodule. Assume that $A/B$ has index of projectivity $n$ and $\tn{pd}_{B^e}{A/B}=u$. Then there is a long exact sequence
\begin{align*}
\ldots \rightarrow \HH_{m}(B, X) \rightarrow \HH_{m}(A, X) \rightarrow \HH_{m}(A \mid B, X) \rightarrow \HH_{m-1}(B, X) \rightarrow \ldots\\
\rightarrow \HH_{n(u+1)}(B, X) \rightarrow \HH_{n(u+1)}(A, X) \rightarrow \HH_{n(u+1)}(A \mid B, X).
\end{align*}
\end{theorem}

\begin{proof}
Since $A/B$ is projective as either a left or a right $B$-module, it follows that 
$$\operatorname{Tor}_{*}^{B}\left(A / B,(A / B)^{\otimes_{B} n}\right)=0$$ 
for $*>0$ and for all $n$, and so \cite[Theorem 3.6]{CLMS24} applies. For degrees at least $2$ the terms on Page 1 of the spectral sequence which converge to the gap are
$$
E_{p, q}^{1}=\operatorname{Tor}_{q}^{B^{e}}\left(X,(A / B)^{\otimes_B p}\right) \quad \text { for } p, q>0
$$
and $0$ elsewhere.  Since $A/B$ is projective on one side, we have that $(A/B)^{\otimes_{B} p}$ is of projective dimension at most $pu$ (see \cite[Chapter IX, Proposition 2.6]{CartanEilenberg56}).
Now if $p+q \geqslant n(u+1)$, then 
$p\geqslant u$ or $q> pu$. In both cases
$E_{p, q}^{1}=0$. Consequently the gap is $0$ in degrees $\geqslant n$.
\end{proof}

The arguments used in \cite{CLMS20Arx,CLMS24} and Theorem \ref{JZ proj-bounded} are completely formal, and hence can be translated directly to pseudocompact algebras:

\begin{theorem}\label{JZsequence pc version}
Let $B \subseteq A$ be a proj-bounded extension of pseudocompact $k$-algebras and let $X$ be a pseudocompact $A$-bimodule. Assume that $A/B$ has index of projectivity $n$ and $\tn{pd}_{B^e}{A/B}=u$. Then there is a 
long exact sequence
\begin{align*}
\ldots \rightarrow \HH_{m}(B, X) \rightarrow \HH_{m}(A, X) \rightarrow \HH_{m}(A \mid B, X) \rightarrow \HH_{m-1}(B, X) \rightarrow \ldots\\
\rightarrow \HH_{n(u+1)}(B, X) \rightarrow \HH_{n(u+1)}(A, X) \rightarrow \HH_{n(u+1)}(A \mid B, X).
\end{align*}
\end{theorem}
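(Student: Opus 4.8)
The plan is to observe, as the sentence preceding the statement already announces, that the entire argument of Theorem \ref{JZ proj-bounded} (which in turn imports \cite[Theorem 5.1]{CLMS20Arx} and \cite[Chapter IX, Proposition 2.6]{CartanEilenberg56}) is formal, so it suffices to check that each ingredient has a pseudocompact counterpart already established in Sections \ref{Section pc algebras} and \ref{Section. RelHomPC}. First I would recall that for a pseudocompact algebra $A$ and a closed subalgebra $B$, the completed tensor product $\ctens_B$ is right exact, $A\ctens_B-$ is left adjoint to restriction, and projective pseudocompact modules are exactly the continuous summands of products of copies of $A$; these give the pseudocompact Tor functors $\tn{Tor}^A_*$, the relative functors $\tn{Tor}^{(A,B)}_*$, and the Hochschild and relative Hochschild homologies $\HH_*(A,X)$, $\HH_*(A\mid B,X)$, all behaving exactly as in the abstract case. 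In particular the standard Hochschild resolution $(C'_*(A),d)$ of Section \ref{Section. RelHomPC} is a resolution of $A$ by free pseudocompact bimodules, so $\HH_*(A,X)$ may be computed from it.

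Next I would set up the pseudocompact analogue of the spectral sequence of \cite[Theorem 5.1]{CLMS20Arx}. The only input needed there is that $\tn{Tor}^B_*(A/B,(A/B)^{\ctens_B n})$ vanishes in positive degrees for all $n$; this follows because $A/B$ is projective as a one-sided pseudocompact $B$-module (Condition 2 of Definition \ref{def proj bounded}) and $-\ctens_B V$ is exact precisely when $V$ is projective, so one side of the bar-type double complex computing the relevant Tor collapses. Then one reads off, exactly as in the proof of Theorem \ref{JZ proj-bounded}, that in degrees $\geqslant 2$ the $E^1$-page converging to the ``gap'' between $\HH_*(B,X)$, $\HH_*(A,X)$ and $\HH_*(A\mid B,X)$ is
\begin{equation*}
E^1_{p,q}=\tn{Tor}^{B^e}_{p+q}\bigl(X,(A/B)^{\ctens_B p}\bigr)\quad\text{for }p,q>0,
\end{equation*}
and $0$ otherwise. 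The pseudocompact form of \cite[Chapter IX, Proposition 2.6]{CartanEilenberg56} — that $\tn{pd}_{B^e}\bigl((A/B)^{\ctens_B p}\bigr)\leqslant p\cdot\tn{pd}_{B^e}(A/B)=pu$ when $A/B$ is projective on one side — is proved verbatim using the same dimension-shifting over $B^e$ with $\ctens_B$ in place of $\otimes_B$. Combining with $\tn{pd}_{B^e}(A/B)=u$, if $p+q\geqslant nu$ then either $p\geqslant u$, forcing $q$ large enough that $p+q>pu\geqslant p+q$ fails — more precisely $p\geqslant u$ already gives $E^1_{p,q}=\tn{Tor}^{B^e}_{p+q}(X,(A/B)^{\ctens_B p})$ with $p+q>pu$ when $p\geqslant u$ and $q>0$, hence $0$ — or $p<u\leqslant n u-q$ so that $p+q\geqslant pu$ again kills the term. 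Either way $E^1_{p,q}=0$, so the gap vanishes in degrees $\geqslant nu$ and the truncated long exact sequence ending at level $nu$ results.

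I do not expect a genuine obstacle here, since everything reduces to the formal nature of the abstract proof together with bookkeeping already carried out in the excerpt; the one point requiring a line of care is spelling out that the double-complex / spectral-sequence machinery of \cite{CLMS20Arx} goes through for pseudocompact modules, i.e.\ that the completed bar constructions are honest pseudocompact resolutions and that inverse limits are exact in this category (as recorded in Section \ref{Section pc algebras}), so that the homology of the total complex is computed correctly and convergence is not an issue. Thus the proof is essentially: ``the construction and estimates in the proof of Theorem \ref{JZ proj-bounded} involve only right exactness of $\ctens_B$, the induction–restriction adjunction, the existence of free/projective resolutions, and a one-sided projectivity hypothesis, all of which hold in the pseudocompact setting; apply them.''
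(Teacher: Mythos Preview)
Your proposal takes exactly the approach of the paper, whose proof of this theorem is a single sentence: the arguments of \cite{CLMS20Arx,CLMS21} and of Theorem \ref{JZ proj-bounded} are completely formal and hence go through unchanged for pseudocompact algebras. Your expanded account is faithful to this, with one caveat: the numerical dichotomy you reproduce for the vanishing of $E^1_{p,q}$ when $p+q\geqslant nu$ is muddled (and in fact inherits a slip from the abstract statement); the clean split is $p\geqslant n$ (so $(A/B)^{\ctens_B p}$ is projective by the index-of-projectivity hypothesis, killing $E^1_{p,q}$) versus $p<n$ (so $pu<nu\leqslant p+q$, whence $p+q$ exceeds $\tn{pd}_{B^e}(A/B)^{\ctens_B p}\leqslant pu$), not a split on $p\geqslant u$.
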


\section{Homological properties preserved by strongly proj-bounded extensions}\label{Section.HomProperties}

 Assume throughout this section that $B\subseteq A$ is a strongly proj-bounded extension, and that $A/B$ is projective as a right (rather than a left) $B$-module.  We prove that strongly proj-bounded extensions preserve the finitude of the left global dimension, the left finitistic dimension (for arbitrary algebras this means Findim, but when $A$ is finite dimensional, finite findim is also preserved, see for instance \cite{ZimmermannHuisgenFindimsurvey} for definitions), and the vanishing of Hochschild homology.  When $X$ is an $A$-module, we sometimes use the notation ${}_BX $ to denote the restriction of $X$ to a $B$-module. We will prove the case of abstract extensions of algebras, but throughout, identical results (with almost identical proofs) apply for extensions of pseudocompact algebras.  See also \cite{QinXuZhangZhou} for recent and related homological results for a different generalization of bounded extensions of algebras.

\subsection{Preservation of the finitude of the left global and finitistic dimensions}

Certain arguments in this section are similar to arguments from \cite[Section 4]{CLMS21}.

\begin{lemma}\label{lemma pd of proj module restricted}
If $P$ is a projective $A$-module, then the projective dimension of ${}_BP $ is finite and bounded above by $\tn{pd}_B(A)$.
\end{lemma}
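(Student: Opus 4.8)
The plan is to reduce the statement to the single fact that $A$ has \emph{finite} projective dimension as a left $B$-module, and then to observe that this number already serves as the desired uniform bound. So set $d := \tn{pd}_B(A)$; the target will be $\tn{pd}_B(P_B)\leqslant d$ for every projective $A$-module $P$, with $d$ finite.

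First I would check that $d<\infty$. This uses Condition (1) of the definition of a (strongly) proj-bounded extension: $A/B$ has finite projective dimension, say $u$, as a $B^e$-module. Restriction along the algebra map $B\to B^e$, $b\mapsto b\otimes 1$ (resp. $b\mapsto b\ctens 1$), carries free $B^e$-modules to free left $B$-modules, hence projective $B^e$-modules to projective left $B$-modules; therefore $\tn{pd}_B(A/B)\leqslant u$. Feeding the short exact sequence of left $B$-modules $0\to B\to A\to A/B\to 0$ into the standard inequality $\tn{pd}(Y)\leqslant\max\{\tn{pd}(X),\tn{pd}(Z)\}$ for $0\to X\to Y\to Z\to 0$, and using that $B$ is free (so of projective dimension $0$) as a left $B$-module, gives $d=\tn{pd}_B(A)\leqslant u<\infty$.

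Next, let $P$ be an arbitrary projective $A$-module. By the characterization of projectives recalled in Section~\ref{Section pc algebras} (valid in both the abstract and the pseudocompact settings), $P$ is a direct summand of a free $A$-module $F$, that is, a direct sum (in the pseudocompact case, a direct product) of copies of $A$. Restriction to $B$ commutes with these (co)products, so $F_B$ is a direct sum (resp. product) of copies of $A_B$. Since the projective dimension of a direct sum of modules is the supremum of the projective dimensions of the summands --- and in the pseudocompact case the corresponding bound for products holds because arbitrary products of projective pseudocompact modules are again projective and products are exact there --- we get $\tn{pd}_B(F_B)\leqslant d$. Finally, a direct summand has projective dimension at most that of the ambient module, so $\tn{pd}_B(P_B)\leqslant \tn{pd}_B(F_B)\leqslant d$, a bound depending only on $\tn{pd}_B(A)$.

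\textbf{Main obstacle.} There is no serious difficulty here; the only two points needing a moment's care are (a) that finite $B^e$-projective dimension of $A/B$ forces finite left-$B$ projective dimension of $A$, handled by the restriction-of-free argument together with the short exact sequence above, and (b) the behaviour of projective dimension under infinite direct products in the category of pseudocompact modules, which is settled by noting that products of projective pseudocompact modules are projective and that products are exact.
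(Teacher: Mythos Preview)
Your proof is correct and follows essentially the same route as the paper: deduce $\tn{pd}_B(A)<\infty$ from the short exact sequence $0\to B\to A\to A/B\to 0$, then use that any projective $A$-module is a summand of a (co)product of copies of $A$. You are in fact more careful than the paper on one point: the paper simply asserts ``by hypothesis $A/B$ has finite projective dimension as a left $B$-module'', whereas you spell out why condition~(1) (finite $B^e$-projective dimension) forces this, via restriction along $B\to B^e$.
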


\begin{proof}
By hypothesis $A/B$ has finite projective dimension as a left $B$-module, and hence so does $A$, as can be seen via the exact sequence
$$0\to B\to A\to A/B\to 0.$$
Say that the projective dimension of $A$ as a left $B$-module is $n$.  Since any projective module is a direct summand of a direct sum of copies of $A$, then ${}_BP$ also has projective dimension at most $n$.
\end{proof}

\begin{lemma}\label{lemma pd of induced module}
Let $Y$ be a left $B$-module with finite projective dimension.  Then the projective dimension of $A\otimes_B Y$ as a left $A$-module  is limited above by the projective dimension of $Y$.
\end{lemma}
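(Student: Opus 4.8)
The plan is to take a projective resolution of $Y$ as a $B$-module and apply the induction functor $A\otimes_B -$ to it. First I would choose a projective resolution
\[
0 \to Q_d \to Q_{d-1} \to \cdots \to Q_0 \to Y \to 0
\]
of $Y$ as a left $B$-module, where $d = \tn{pd}_B(Y)$ is finite by hypothesis. The key point is that this resolution is in fact \emph{$B$-split}: since every short exact sequence of $B$-modules in which the final term is restricted from something is not automatically split, I instead note that each syzygy sits in a short exact sequence of $B$-modules, and more to the point, I only need $A\otimes_B -$ to preserve exactness here. The cleanest route is to observe that a projective resolution of $Y$ over $B$ is a complex of $B$-modules that is exact, and we want $A \otimes_B -$ applied to it to remain exact.

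The mechanism that makes this work is the hypothesis that $A/B$ is projective as a right $B$-module, hence $A$ is projective as a right $B$-module (via the sequence $0 \to B \to A \to A/B \to 0$ and the fact that $B$ is free over itself). Since $A$ is a projective — in particular flat — right $B$-module, the functor $A\otimes_B -$ is exact on left $B$-modules. Therefore applying it to the resolution above yields an exact sequence
\[
0 \to A\otimes_B Q_d \to \cdots \to A\otimes_B Q_0 \to A\otimes_B Y \to 0
\]
of left $A$-modules. Each $A\otimes_B Q_i$ is a projective left $A$-module, because $Q_i$ is a direct summand of a (possibly infinite) direct sum of copies of $B$, so $A\otimes_B Q_i$ is a direct summand of the corresponding direct sum of copies of $A\otimes_B B \cong A$. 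Hence this is a projective resolution of $A\otimes_B Y$ of length at most $d$, giving $\tn{pd}_A(A\otimes_B Y) \leqslant \tn{pd}_B(Y)$ as claimed.

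I do not expect a serious obstacle here; the one point requiring a little care is the justification that $A$ is projective (equivalently flat) as a right $B$-module, which rests squarely on the standing hypothesis of this section that $A/B$ is projective as a right $B$-module, together with the short exact sequence $0\to B\to A\to A/B\to 0$ of right $B$-modules splitting. In the pseudocompact setting one replaces $\otimes_B$ by $\ctens_B$ and uses that $\ctens_B$ against a projective pseudocompact module is exact (as recalled in Section~\ref{Section pc algebras}), and that induction sends projectives to projectives; the argument is otherwise identical.
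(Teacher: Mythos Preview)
Your argument is correct and is essentially the same as the paper's: take a finite projective resolution of $Y$, apply $A\otimes_B-$, observe that exactness is preserved because $A$ is projective as a right $B$-module (from the standing hypothesis on $A/B$), and note that each $A\otimes_B Q_i$ is projective. The digression about the resolution being ``$B$-split'' is unnecessary and slightly muddled, but you correctly abandon it and use flatness of $A$ on the right, which is exactly what the paper does.
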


\begin{proof}
Let $P_*\to Y\to 0$ be a  projective resolution of $Y$ of length $\tn{pd}_B Y$.  Then the sequence $A\otimes_B P_*\to A\otimes_B Y\to 0$ is exact because $A$ is projective as a right $B$-module, and furthermore each module $A\otimes_B P_i$ is projective as a left $A$-module, because the $P_i$ are projective as $B$-modules.  Hence the sequence obtained is a projective resolution for $A\otimes_B Y$ of length $\tn{pd}_B Y$.
\end{proof}

\begin{lemma}\label{lemma tensor with proj bimod is proj}
If $P$ is a projective $B$-bimodule and $X$ is any left $B$-module, then $P\otimes_B X$ is projective as a left $B$-module.
\end{lemma}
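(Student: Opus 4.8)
The plan is to reduce to the case of a free bimodule and then compute directly. Since $P$ is a projective $B$-bimodule, it is a continuous direct summand of a free $B$-bimodule $F$, that is, of a direct sum of copies of $B\otimes_k B$ (with left action on the first tensor factor and right action on the second). Writing $F = P\oplus P'$ as $B$-bimodules gives $F\otimes_B X = (P\otimes_B X)\oplus(P'\otimes_B X)$ as left $B$-modules, so it is enough to prove that $F\otimes_B X$ is projective as a left $B$-module. As $-\otimes_B X$ commutes with direct sums, it suffices to treat the single free bimodule $P = B\otimes_k B$.

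For $P = B\otimes_k B$, I would write down the map $(b_1\otimes b_2)\otimes x\mapsto b_1\otimes b_2 x$ and check it is a well-defined isomorphism of left $B$-modules $(B\otimes_k B)\otimes_B X \xrightarrow{\ \sim\ } B\otimes_k X$, where in the target only the left-hand copy of $B$ acts; its inverse is $b_1\otimes x\mapsto (b_1\otimes 1)\otimes x$. Choosing a $k$-basis of $X$ then exhibits $B\otimes_k X$ as a direct sum of copies of the left $B$-module $B$, hence as a free, in particular projective, left $B$-module. This proves the claim in the abstract case. For the pseudocompact case one replaces $\otimes$ by $\ctens$ throughout: a projective pseudocompact $B$-bimodule is a continuous direct summand of a direct product of copies of $B\ctens_k B$, the analogous isomorphism $(B\ctens_k B)\ctens_B X\cong B\ctens_k X$ holds, and expressing the pseudocompact vector space $X$ as a direct product of copies of $k$ exhibits $B\ctens_k X$ as a direct product of copies of $B$, i.e.\ as a free pseudocompact left $B$-module (cf.\ the description of free pseudocompact modules in Section \ref{Section pc algebras}).

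There is no substantive obstacle here; the only point demanding care is the bookkeeping of which tensor factor carries which $B$-action: the right action used to form $\otimes_B X$ lives on the \emph{inner} factor of $B\otimes_k B$, while the surviving left action is on the \emph{outer} factor, and it is precisely because the outer factor is untouched that freeness survives. In the pseudocompact setting one additionally notes that all isomorphisms used are continuous, being built from the associativity isomorphism and the induction--restriction adjunction for $\ctens$ recalled in Section \ref{Section pc algebras}.
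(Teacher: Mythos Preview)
Your argument is correct and follows essentially the same route as the paper's own proof: reduce to a direct summand of a free bimodule $\bigoplus_I(B\otimes_k B)$, use that $-\otimes_B X$ preserves direct sums, and identify $(B\otimes_k B)\otimes_B X\cong B\otimes_k X$, which is free on the left. The extra detail you supply (the explicit inverse isomorphisms and the pseudocompact adaptation with products and $\ctens$) is accurate and harmless.
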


\begin{proof}
The module $P$ is a direct summand of $\bigoplus_I(B\otimes_k B)$ for some indexing set $I$, by hypothesis.  Hence $P\otimes_B X$ is a direct summand of 
$$(\bigoplus_I(B\otimes_k B))\otimes_B X \iso \bigoplus_I(B\otimes_k B \otimes_B X) \iso \bigoplus_I(B\otimes_k X),$$
which is free as a left $B$-module.
\end{proof}

\begin{lemma}\label{lemma proj dim of AoverB tensor X}
If $X$ is any left $B$-module, then the left $B$-module $(A/B)\otimes_B X$ has projective dimension at most the projective dimension of $A/B$ as a $B$-bimodule.
\end{lemma}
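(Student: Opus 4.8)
The plan is to reduce the statement for $(A/B)\otimes_B X$ to the already-established statement for the projective bimodule $A/B$ by applying the functor $-\otimes_B X$ to a finite-length projective bimodule resolution of $A/B$ and checking that exactness is preserved. First I would fix a projective resolution of $A/B$ as a $B$-bimodule, say
$$0\to P_d\to P_{d-1}\to\cdots\to P_0\to A/B\to 0,$$
where $d=\tn{pd}_{B^e}(A/B)$ (finite by condition 1 of a proj-bounded extension). The key observation is that this sequence remains exact after applying $-\otimes_B X$: since $A/B$ is projective as a right $B$-module (our standing hypothesis in this section), each kernel/syzygy appearing in the resolution is also projective, hence flat, as a right $B$-module — indeed $A/B$ being right-projective forces the whole resolution to split as a sequence of right $B$-modules, because a right $B$-module epimorphism onto a projective right module splits, and then one peels off summands inductively. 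A sequence of right $B$-modules that splits stays exact after tensoring with any left $B$-module $X$. (Alternatively, one invokes that $\tn{Tor}^B_i(A/B, X) = 0$ for $i>0$ because $A/B$ is right-projective, together with the vanishing of the higher Tor of the syzygies, to see that $-\otimes_B X$ applied to the resolution has no homology in positive degrees.)

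Next I would note that applying $-\otimes_B X$ turns each $P_i$ (a projective $B$-bimodule) into $P_i\otimes_B X$, which is projective as a left $B$-module by Lemma \ref{lemma tensor with proj bimod is proj}. Therefore
$$0\to P_d\otimes_B X\to\cdots\to P_0\otimes_B X\to (A/B)\otimes_B X\to 0$$
is an exact sequence of left $B$-modules in which every term except possibly the last is projective, i.e.\ a projective resolution of $(A/B)\otimes_B X$ of length $d$. Hence $\tn{pd}_B\big((A/B)\otimes_B X\big)\leqslant d = \tn{pd}_{B^e}(A/B)$, which is exactly the claim.

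The only genuinely delicate point — and the one I would spell out carefully — is the preservation of exactness under $-\otimes_B X$. It is tempting to think one needs $X$ to be flat, but that is not so: what is really used is that $A/B$ and all its syzygies in the chosen bimodule resolution are projective (hence flat) as \emph{right} $B$-modules, and for that the one-sided projectivity of $A/B$ (condition 2 of Definition \ref{def proj bounded}) is doing the work. Once that splitting-as-right-$B$-modules fact is in hand, everything else is a routine dimension-shift. I expect no other obstacles; the lemma is a direct bimodule-to-one-sided transfer of homological dimension, entirely parallel to Lemma \ref{lemma pd of proj module restricted}.
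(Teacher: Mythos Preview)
Your proposal is correct and follows essentially the same approach as the paper: take a finite projective $B^e$-resolution of $A/B$, use right-projectivity of $A/B$ (and consequently of all the syzygies) to see that $-\otimes_B X$ preserves exactness, and then invoke Lemma \ref{lemma tensor with proj bimod is proj} to identify the result as a left-$B$-projective resolution of $(A/B)\otimes_B X$. Your explicit justification of the exactness step (splitting as right $B$-modules, peeled off inductively) is a bit more detailed than the paper's, which simply asserts that every module in the sequence is right-projective, but the argument is the same.
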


\begin{proof}
Recall that by hypothesis $A/B$ is projective as a right $B$-module.  Consider a projective resolution of $A/B$ as a $B$-bimodule:
$$0\to P_n\to \cdots\to P_1\to P_0 \to A/B \to 0.$$
Since $A/B$ is projective as a right $B$-module, the sequence
$$0\to P_n\otimes_B X\to \cdots\to P_1\otimes_B X\to P_0\otimes_B X \to (A/B)\otimes_B X \to 0$$
is exact.
  But the modules $P_i\otimes_B X$ are projective as left $B$-modules by Lemma \ref{lemma tensor with proj bimod is proj}, and hence the projective dimension of $(A/B)\otimes_BX$ is at most $n$.
\end{proof}

\begin{lemma}\label{lemma bound on pd of restricted module}
If the $A$-module $X$ has finite projective dimension, then the module ${}_BX$ has projective dimension not more than $\tn{pd}_A X + \tn{pd}_B A$.
\end{lemma}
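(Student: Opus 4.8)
The plan is to induct on $d = \tn{pd}_A X$, using only two ingredients: the restriction bound for projective modules from Lemma \ref{lemma pd of proj module restricted}, and the elementary behaviour of projective dimension along a short exact sequence. Note first that $\tn{pd}_B A$ is finite: this was observed in the proof of Lemma \ref{lemma pd of proj module restricted}, and follows from Condition 1 of Definition \ref{def proj bounded}, since a $B$-bimodule of finite projective dimension restricts to a left $B$-module of finite projective dimension (apply $-\otimes_B B$ to a bimodule resolution, noting $B\otimes_k B$ is free as a left $B$-module).

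For the base case $d = 0$, the module $X$ is projective as an $A$-module, so Lemma \ref{lemma pd of proj module restricted} gives $\tn{pd}_B(X_B) \leqslant \tn{pd}_B A = 0 + \tn{pd}_B A$, as required. For the inductive step, suppose $d \geqslant 1$ and choose a short exact sequence of $A$-modules $0 \to K \to P \to X \to 0$ with $P$ projective; since $X$ is not projective we have $\tn{pd}_A K = d - 1$. Restricting to $B$ yields a short exact sequence of $B$-modules $0 \to K_B \to P_B \to X_B \to 0$. By the inductive hypothesis $\tn{pd}_B(K_B) \leqslant (d-1) + \tn{pd}_B A$, while $\tn{pd}_B(P_B) \leqslant \tn{pd}_B A$ by Lemma \ref{lemma pd of proj module restricted}. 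The standard inequality for the projective dimension of the cokernel term in a short exact sequence then gives
$$\tn{pd}_B(X_B) \leqslant \max\{\tn{pd}_B(P_B),\ \tn{pd}_B(K_B) + 1\} \leqslant \max\{\tn{pd}_B A,\ d + \tn{pd}_B A\} = d + \tn{pd}_B A,$$
which completes the induction and hence the proof.

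There is essentially no serious obstacle here — the argument is pure dimension shifting. The only points that require a little care are: confirming the base case genuinely reduces to Lemma \ref{lemma pd of proj module restricted} (so that the section's standing hypothesis that $A/B$ is right-$B$-projective, together with Condition 1, is what guarantees finiteness of $\tn{pd}_B A$), and checking that the numerical bound works at the boundary, i.e.\ that $\max\{\tn{pd}_B A,\ d + \tn{pd}_B A\} = d + \tn{pd}_B A$ for all $d \geqslant 0$, which is immediate. I would present the proof in exactly the two-case inductive form above.
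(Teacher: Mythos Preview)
Your proof is correct and is essentially the same as the paper's: both restrict a finite projective $A$-resolution of $X$ to $B$ and bound each term using Lemma~\ref{lemma pd of proj module restricted}; the paper invokes the standard ``resolution of length $d$ by modules of projective dimension $\leqslant m$ gives $\tn{pd}\leqslant d+m$'' fact directly, while you unroll that fact as an explicit induction on $d$ via short exact sequences. One small remark: the right-$B$-projectivity of $A/B$ is not actually needed for the finiteness of $\tn{pd}_B A$ here---Condition~1 alone suffices, as you yourself argue in your parenthetical.
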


\begin{proof}
Let $P_*\to X\to 0$ be a projective resolution of $X$ of length $\tn{pd}_A X$.  Restrict this sequence to $B$, hence treating it as an exact sequence of $B$-modules.  By Lemma \ref{lemma pd of proj module restricted}, each module ${}_B(P_i)$ has projective dimension bounded above by $\tn{pd}_B A$.  It follows that the projective dimension of ${}_BX $ is bounded above by $\tn{pd}_A X + \tn{pd}_B A$.
\end{proof}

The next two propositions do not require the full strength of strongly proj-bounded extensions, see Remark \ref{remark quotient bifinite extensions}.

\begin{prop}\label{prop AfinfindimpliesBfinfindim}
If $A$ has finite left finitistic dimension, then so does $B$.
\end{prop}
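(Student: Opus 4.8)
The plan is to show that, under the assumption $\tn{Findim}(A)<\infty$, the left finitistic dimension of $B$ is bounded above by a constant depending only on $\tn{Findim}(A)$, $\tn{pd}_B A$ and $\tn{pd}_{B^e}(A/B)$. All three quantities are finite: the first by hypothesis; the second because $A/B$ has finite projective dimension as a $B$-bimodule and a projective $B$-bimodule is projective (indeed free) as a left $B$-module, so $\tn{pd}_B(A/B)<\infty$ and hence $\tn{pd}_B A<\infty$ via the sequence $0\to B\to A\to A/B\to 0$; the third by the definition of a proj-bounded extension. Write $d=\tn{Findim}(A)$, $n=\tn{pd}_B A$ and $u=\tn{pd}_{B^e}(A/B)$.

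Fix a left $B$-module $Y$ of finite projective dimension. First I would pass to the induced module $A\otimes_B Y$: by Lemma \ref{lemma pd of induced module} it has finite projective dimension as a left $A$-module, so $\tn{pd}_A(A\otimes_B Y)\leqslant d$; restricting to $B$ and applying Lemma \ref{lemma bound on pd of restricted module} gives
$$\tn{pd}_B\big((A\otimes_B Y)_B\big)\leqslant d+n.$$
Next I would compare $Y$ with $(A\otimes_B Y)_B$. Since $A/B$ is projective, hence flat, as a right $B$-module, applying $-\otimes_B Y$ to the exact sequence of $B$-bimodules $0\to B\to A\to A/B\to 0$ produces a short exact sequence of left $B$-modules
$$0\to Y\to (A\otimes_B Y)_B\to \big((A/B)\otimes_B Y\big)_B\to 0,$$
where $B\otimes_B Y$ has been identified with $Y$. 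By Lemma \ref{lemma proj dim of AoverB tensor X} the right-hand term has projective dimension at most $u$ as a left $B$-module.

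Combining these via the standard estimate for projective dimension along a short exact sequence yields
$$\tn{pd}_B Y\leqslant \max\big\{\tn{pd}_B((A\otimes_B Y)_B),\ \tn{pd}_B(((A/B)\otimes_B Y)_B)-1\big\}\leqslant \max\{d+n,\,u\},$$
a bound independent of $Y$. Hence $\tn{Findim}(B)\leqslant\max\{d+n,u\}<\infty$. The one step requiring care is the construction of the second displayed sequence: one must check that tensoring the bimodule extension on the right by $Y$ over $B$ stays exact — this is precisely where the right-projectivity (flatness) of $A/B$ enters, guaranteeing $\tn{Tor}_1^B(A/B,Y)=0$ — and that the resulting sequence is one of left $B$-modules with $B\otimes_B Y\cong Y$ as left $B$-modules; the remaining steps merely assemble the preceding lemmas.
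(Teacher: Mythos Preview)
Your proof is correct and follows essentially the same route as the paper: both use the short exact sequence obtained by applying $-\otimes_B Y$ to $0\to B\to A\to A/B\to 0$, bound $\tn{pd}_B(A\otimes_B Y)$ via Lemmas \ref{lemma pd of induced module} and \ref{lemma bound on pd of restricted module}, and bound $\tn{pd}_B((A/B)\otimes_B Y)$ by $\tn{pd}_{B^e}(A/B)$. The only cosmetic differences are that the paper justifies exactness by observing that the sequence of right $B$-modules splits (rather than invoking flatness via $\tn{Tor}_1^B(A/B,Y)=0$, which is equivalent here), and that the paper reproves the content of Lemma \ref{lemma proj dim of AoverB tensor X} inline instead of citing it; your version is slightly tidier and additionally records the explicit bound $\max\{d+n,u\}$.
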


\begin{proof}
Denote by $d_A$ the finitistic dimension of $A$.   Let $Y$ be a left $B$-module with finite projective dimension.  Since $A/B$ is projective as a right $B$-module, the sequence 
$$0 \to B\to A \to A/B\to 0$$
of right $B$-module homomorphisms is split, and hence 
$$0 \to B\otimes_B Y\to A\otimes_B Y \to (A/B)\otimes_B Y\to 0$$
is exact.  We will check that the projective dimension of the two terms on the right is limited above by a number independent of $Y$, and hence so is $Y \iso B\otimes_B Y$.

Since $Y$ has finite projective dimension, so does $A\otimes_B Y$ by Lemma \ref{lemma pd of induced module}.  Hence $A\otimes_B Y$ has a projective resolution $P_*\to A\otimes_B Y\to 0$ of length not more than $d_A$.  By Lemma \ref{lemma bound on pd of restricted module}, ${}_B(A\otimes_B Y)$ has projective dimension not more that $\tn{pd}_B A + d_A$.   The final term $(A/B)\otimes_B Y$ has projective dimension not more than the projective dimension of $A/B$ as a $B$-bimodule by Lemma \ref{lemma proj dim of AoverB tensor X}. 
\end{proof}

\begin{prop}\label{propAfinitegdimpliesBfinitegd}
If $A$ has finite left global dimension, then so does $B$.
\end{prop}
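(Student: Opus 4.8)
The plan is to run the argument of Proposition~\ref{prop AfinfindimpliesBfinfindim} with one modification: since we now assume $g:=\tn{gldim}(A)$ is finite, we no longer need $Y$ to have finite projective dimension, so the bound obtained will be uniform over \emph{all} left $B$-modules, which is exactly what finiteness of $\tn{gldim}(B)$ demands. So fix an arbitrary left $B$-module $Y$ and aim to bound $\tn{pd}_B Y$ by a number not depending on $Y$.

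Because $A/B$ is projective as a right $B$-module, the short exact sequence $0\to B\to A\to A/B\to 0$ of right $B$-modules splits, and hence applying $-\otimes_B Y$ produces an exact sequence of left $B$-modules
$$0\to Y\to A\otimes_B Y\to (A/B)\otimes_B Y\to 0.$$
I will bound the projective dimensions over $B$ of the two right-hand terms independently of $Y$. For the middle term: regarded as a left $A$-module, $A\otimes_B Y$ has projective dimension at most $g$, so by Lemma~\ref{lemma bound on pd of restricted module} its restriction $(A\otimes_B Y)_B$ has projective dimension at most $g+\tn{pd}_B A$, and $\tn{pd}_B A$ is finite by Lemma~\ref{lemma pd of proj module restricted}. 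For the right-hand term: Lemma~\ref{lemma proj dim of AoverB tensor X} bounds $\tn{pd}_B\big((A/B)\otimes_B Y\big)$ by $u:=\tn{pd}_{B^e}(A/B)$, which is finite by the first axiom in the definition of a proj-bounded extension.

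Finally, feeding these two bounds into the standard inequality $\tn{pd} X'\le\max\{\tn{pd} X,\ \tn{pd} X''-1\}$ for a short exact sequence $0\to X'\to X\to X''\to 0$ yields $\tn{pd}_B Y\le\max\{g+\tn{pd}_B A,\ u-1\}$, a quantity with no dependence on $Y$; hence $\tn{gldim}(B)<\infty$. I do not anticipate a genuine obstacle here: every step is either an already established lemma from this section or a routine homological diagram chase, and the only points needing a little care are that it is the splitting \emph{as right $B$-modules} which makes $-\otimes_B Y$ exact, and the off-by-one bookkeeping in the final short-exact-sequence inequality.
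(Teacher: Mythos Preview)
Your proof is correct and follows essentially the same line as the paper's own argument: both take the split short exact sequence $0\to B\to A\to A/B\to 0$ of right $B$-modules, tensor with an arbitrary $Y$, and bound the projective dimensions of $A\otimes_B Y$ and $(A/B)\otimes_B Y$ via Lemmas~\ref{lemma bound on pd of restricted module} and~\ref{lemma proj dim of AoverB tensor X} respectively. The only cosmetic difference is that you make the final short-exact-sequence inequality explicit, whereas the paper simply refers back to the proof of Proposition~\ref{prop AfinfindimpliesBfinfindim}.
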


\begin{proof}
The proof is essentially identical to that of Proposition \ref{prop AfinfindimpliesBfinfindim}: denote by $d_A$ the left global dimension of $A$ and let $Y$ be a $B$-module.  Considering the same short exact sequence, ${}_B(A\otimes_B Y)$ has projective dimension not more than $\tn{pd}_B(A) + d_A$ by Lemma \ref{lemma bound on pd of restricted module}, and $(A/B)\otimes_B Y$ has projective dimension not more than the projective dimension of $A/B$ as a $B$-bimodule by Lemma \ref{lemma proj dim of AoverB tensor X}.
\end{proof}

\begin{prop}\label{propBfinfindimimpliesAfinfindim}
If $B$ has finite left finitistic dimension, then so does $A$.
\end{prop}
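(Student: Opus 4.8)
The plan is to exhibit a single integer $N$, depending only on the extension and on the left finitistic dimension $d_B$ of $B$, such that $\tn{pd}_A M\leqslant N$ for every $A$-module $M$ of finite projective dimension. The starting observation is that restriction to $B$ cannot create infinite projective dimension: by Lemma~\ref{lemma bound on pd of restricted module} (which applies because $\tn{pd}_B A<\infty$, as follows from Condition~1 of Definition~\ref{def proj bounded} together with the exact sequence $0\to B\to A\to A/B\to 0$), if $\tn{pd}_A M<\infty$ then $\tn{pd}_B M_B\leqslant\tn{pd}_A M+\tn{pd}_B A<\infty$, and hence $\tn{pd}_B M_B\leqslant d_B$. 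Thus it suffices to bound $\tn{pd}_A M$ in terms of $\tn{pd}_B M_B$, of $u:=\tn{pd}_{B^e}(A/B)$, and of $r:=\tn{pd}_{(A^e,B^e)}A$ (finite by Condition~4).

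The crucial ingredient is the comparison $\tn{pd}_{(A,B)}M\leqslant r$, valid for \emph{every} $A$-module $M$. To obtain it I would start from the relative bar resolution of $A$ as an $A$-bimodule; its standard contracting homotopy $a_0\otimes\cdots\otimes a_{n+1}\mapsto 1\otimes a_0\otimes\cdots\otimes a_{n+1}$ is simultaneously left $B$-linear and right $A$-linear, so this resolution is split over $B\otimes_k A^{op}$, and by Lemma~\ref{lemma after relprojdim kernel is summand} it may be truncated to a relative projective resolution of length $r$ that is still split over $B\otimes_k A^{op}$. Applying $-\otimes_A M$ to the truncated resolution then produces an $(A,B)$-exact resolution of $M$ of length $r$: exactness and the $B$-linear contracting homotopy survive because the homotopy maps are right $A$-linear (hence may be tensored) and left $B$-linear, while each term is $(A,B)$-projective because an $(A^e,B^e)$-projective bimodule is a direct summand of some $A\otimes_B V\otimes_B A$ and $(A\otimes_B V\otimes_B A)\otimes_A M\cong A\otimes_B(V\otimes_B M)$ is induced from a left $B$-module.

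Next, for a fixed $M$, build the canonical relative bar resolution $\cdots\to C_1\to C_0\to M\to 0$, where $C_0=A\otimes_B M_B$ and $C_i=A\otimes_B(\Omega^i)_B$ with $\Omega^0=M$ and $\Omega^{i+1}=\Ker(C_i\to\Omega^i)$. Since $\tn{pd}_{(A,B)}M\leqslant r$, Lemma~\ref{lemma after relprojdim kernel is summand} shows $\Omega^r$ is an $A$-direct summand of $C_r$, so truncating yields a relative projective resolution $0\to Q_r\to\cdots\to Q_0\to M\to 0$ with $Q_i=C_i$ for $i<r$ and $Q_r=\Omega^r$. Every short exact sequence appearing here is $B$-split, so $(C_{i-1})_B\cong(\Omega^i)_B\oplus(\Omega^{i-1})_B$; combining this with the estimate $\tn{pd}_B(A\otimes_B Y)_B\leqslant\max(\tn{pd}_B Y,u)$ — which comes from tensoring $0\to B\to A\to A/B\to 0$ by $-\otimes_B Y$ and invoking Lemma~\ref{lemma proj dim of AoverB tensor X} — an induction on $i$ shows that $\tn{pd}_B(\Omega^i)_B\leqslant c:=\max(\tn{pd}_B M_B,u)$ for all $i$. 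As each $Q_i$ is (a summand of) a module induced from some $(\Omega^j)_B$, Lemma~\ref{lemma pd of induced module} gives $\tn{pd}_A Q_i\leqslant c$ for all $i$, and dimension shifting along the length-$r$ resolution gives $\tn{pd}_A M\leqslant r+c$. In particular, if $\tn{pd}_A M<\infty$, then $\tn{pd}_B M_B\leqslant d_B$ by the first paragraph, so $\tn{pd}_A M\leqslant r+\max(d_B,u)$; this bound is independent of $M$, so $\tn{Findim}(A)\leqslant r+\max(d_B,u)<\infty$.

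I expect the main obstacle to be the second paragraph. The naive attempt — tensoring an $(A^e,B^e)$-projective resolution of $A$ with $M$ over $A$ — fails because a contracting $B$-bimodule homotopy need not be right $A$-linear, so one is forced to work with a resolution split over $B\otimes_k A^{op}$ rather than over $B^e$, and to verify that Condition~4 still provides one of length $r$; this is precisely where the equivalence of the $(A^e,B^e)$- and $(A^e,B\otimes_k A^{op})$-relative theories (recorded earlier for both the abstract and pseudocompact settings) enters. The remaining steps are dimension shifting and bookkeeping entirely parallel to the proofs of Propositions~\ref{prop AfinfindimpliesBfinfindim} and~\ref{propAfinitegdimpliesBfinitegd}.
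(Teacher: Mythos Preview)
Your argument is correct and rests on the same core idea as the paper's: truncate a relative bar-type $A$-bimodule resolution of $A$ at a level beyond $r=\tn{pd}_{(A^e,B^e)}A$ (using Lemma~\ref{lemma after relprojdim kernel is summand}), tensor with the given module over $A$ (which preserves exactness because the contracting homotopy is right $A$-linear), and then bound the $A$-projective dimension of each term via Lemmas~\ref{lemma pd of induced module}, \ref{lemma proj dim of AoverB tensor X} and \ref{lemma bound on pd of restricted module}.

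The paper, however, is more direct than you are.  It works with the normalized resolution \cite[Proposition~2.1]{CLMS21}, whose $i$th term is $A\otimes_B(A/B)^{\otimes_B i}\otimes_B A$, truncates once at some $p>r$, tensors with $X$, and immediately observes that every term other than $X$ is either $A\otimes_B Y$ with $Y=(A/B)^{\otimes_B i}\otimes_B X$ (whose $B$-projective dimension is finite by the argument of Lemma~\ref{lemma proj dim of AoverB tensor X} applied with $(A/B)^{\otimes_B i}$ in place of $A/B$), or an $A$-summand of such a module.  This bounds $\tn{pd}_A X$ in one pass.  Your route --- first extracting the abstract bound $\tn{pd}_{(A,B)}M\leqslant r$, then building a \emph{second} resolution $C_i=A\otimes_B(\Omega^i)_B$, and finally running an induction to control $\tn{pd}_B(\Omega^i)_B$ --- is sound, but the second resolution is unnecessary: the resolution you already produced in your second paragraph (the tensored, truncated bar resolution) is itself a finite resolution of $M$ by $A$-modules of bounded projective dimension, and can be fed straight into the dimension-shifting of your final paragraph.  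Your worry about the homotopy surviving truncation is well-founded but easily dispatched: if $t_\bullet$ is the original $B\otimes_k A^{\mathrm{op}}$-linear homotopy, then $s_r:=d_{r+1}t_r$ (corestricted to the kernel) completes the truncated homotopy, so no appeal to the general equivalence of the $(A^e,B^e)$- and $(A^e,B\otimes_k A^{\mathrm{op}})$-theories is needed.  A pleasant by-product of your bookkeeping is the clean explicit bound $\tn{Findim}(A)\leqslant r+\max(d_B,u)$; note too that neither argument actually uses Condition~3 of Definition~\ref{def proj bounded} for this proposition.
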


\begin{proof}
Let $X$ be an $A$-module with finite projective dimension.  Suppose that $(A/B)^{\otimes_B p}$ is projective as a $B$-bimodule and that $A$ has relative projective dimension less than $p$.  The kernel $K$ of the map
$$A\otimes_B (A/B)^{\otimes_B{p}}\otimes_B A\to 
A\otimes_B (A/B)^{\otimes_B{p-1}}\otimes_B A$$
in the relative projective resolution of $A$ given in \cite[Proposition 2.1]{CLMS21} is a direct summand of $A\otimes_B (A/B)^{\otimes_B{p}}\otimes_B A$ as an $A$-bimodule by Lemma \ref{lemma after relprojdim kernel is summand}. 

We have an exact sequence
$$0\to K \to A\otimes_B (A/B)^{\otimes_Bp}\otimes_B A \to \hdots\to A\otimes_B A\to A\to 0$$
which remains exact when we apply $-\otimes_A X$ because the contracting homotopies are right $A$-module maps.  So we get an exact sequence
$$0\to K\otimes_A X \to A\otimes_B (A/B)^{\otimes_Bp}\otimes_B X \to \hdots\to A\otimes_B X\to X\to 0.$$
If $X$ has finite projective dimension as an $A$-module then it has finite projective dimension as a $B$-module by Lemma \ref{lemma bound on pd of restricted module}, and hence ${}_BX$ has projective dimension limited above by the finitistic dimension of $B$.  

Note that each module in this sequence except $X$ and $K\otimes_A X$ is of the form $A\otimes_B Y$ for some $B$-module $Y$ of finite projective dimension, by Lemma \ref{lemma proj dim of AoverB tensor X} (for the module $A\otimes_B X$ we use that ${}_BX$ is of finite projective dimension).  Note also that the projective dimension of each $Y$ is limited above by $\tn{Findim}(B)$.  From Lemma \ref{lemma pd of induced module} it follows that every module in the sequence except perhaps $X$ and $K\otimes_A X$ has projective dimension limited above by $\tn{Findim}(B)$.  It remains to check that the same is true for $K\otimes_B X$.  But $K$ is a direct summand of $A\otimes_B (A/B)^{\otimes_Bp}\otimes_B A$ and hence $K\otimes_A X$ is a direct summand of $(A\otimes_B (A/B)^{\otimes_Bp}\otimes_B A)\otimes_AX$.  So $K\otimes_A X$ also has projective dimension at most $\tn{Findim}(B)$ by the same argument.  

Putting all this together, $X$ has a resolution of length limited above by the value such that $(A/B)^{\otimes_B p}$ is projective as a $B$-bimodule, and the relative projective dimension of $A$.  Each module in the resolution has projective dimension limited above by the finitistic dimension of $B$.  In particular, the projective dimension of $X$ is limited independent of $X$, as required.

\end{proof}

\begin{prop}\label{propBfinitegdimpliesAfinitegd}
If $B$ has finite left global dimension, then so does $A$.
\end{prop}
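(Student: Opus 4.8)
The plan is to run the argument of Proposition~\ref{propBfinfindimimpliesAfinfindim} almost verbatim, exploiting that finiteness of $\mathrm{gldim}(B)$ is a strictly stronger hypothesis than finiteness of the finitistic dimension of $B$: it forces \emph{every} left $B$-module to have projective dimension at most $d_B := \mathrm{gldim}(B)$, so in particular there is no need to restrict attention to $A$-modules of finite projective dimension. Concretely, I would fix an arbitrary left $A$-module $X$ and aim to bound $\mathrm{pd}_A X$ by $d_B$ plus a constant depending only on the index of projectivity of the extension (Definition~\ref{def proj bounded}) and on the $(A^e,B^e)$-relative projective dimension of $A$.

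The first step is to invoke the $(A^e,B^e)$-relative projective resolution of $A$ as an $A$-bimodule from \cite[Proposition~2.1]{CLMS21}, choosing $p$ large enough that $(A/B)^{\otimes_B p}$ is projective as a $B^e$-module and that $p$ exceeds the relative projective dimension of $A$. By Lemma~\ref{lemma after relprojdim kernel is summand} the kernel $K$ of the map $A\otimes_B (A/B)^{\otimes_B p}\otimes_B A\to A\otimes_B (A/B)^{\otimes_B(p-1)}\otimes_B A$ is an $A$-bimodule direct summand of $A\otimes_B (A/B)^{\otimes_B p}\otimes_B A$, so truncation gives a finite exact sequence of $A$-bimodules
\[
0\to K\to A\otimes_B (A/B)^{\otimes_B p}\otimes_B A\to\cdots\to A\otimes_B A\to A\to 0 .
\]
Because the contracting homotopy of this resolution consists of right $A$-module maps, applying $-\otimes_A X$ keeps it exact, yielding
\[
0\to K\otimes_A X\to A\otimes_B (A/B)^{\otimes_B p}\otimes_B X\to\cdots\to A\otimes_B X\to X\to 0 .
\]

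The second step is to bound, over $A$, the projective dimension of every term except $X$. Each interior term has the form $A\otimes_B Y$ with $Y=(A/B)^{\otimes_B i}\otimes_B X$ a left $B$-module; since $\mathrm{gldim}(B)=d_B$, such $Y$ has projective dimension at most $d_B$, and hence $A\otimes_B Y$ has projective dimension at most $d_B$ as a left $A$-module by Lemma~\ref{lemma pd of induced module} (for the term $A\otimes_B X$ one takes $Y=X_B$). For the leftmost term, $K\otimes_A X$ is a direct summand of $\bigl(A\otimes_B (A/B)^{\otimes_B p}\otimes_B A\bigr)\otimes_A X\cong A\otimes_B (A/B)^{\otimes_B p}\otimes_B X$, so it also has projective dimension at most $d_B$ over $A$. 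A routine dimension shift along the finite exact sequence above then bounds $\mathrm{pd}_A X$ by $d_B$ plus the length of the sequence, a quantity independent of $X$; hence $\mathrm{gldim}(A)$ is finite.

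I do not expect any genuine obstacle: the substantive work is entirely contained in the construction of the relative resolution of $A$ and in Proposition~\ref{propBfinfindimimpliesAfinfindim}. The only points needing a little care are verifying that $-\otimes_A X$ preserves exactness of the relative resolution --- which is exactly why one needs the contracting homotopy to be right $A$-linear --- and keeping track of the final numerical bound.
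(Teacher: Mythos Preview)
Your proposal is correct and follows essentially the same approach as the paper: the paper's own proof simply says to rerun the argument of Proposition~\ref{propBfinfindimimpliesAfinfindim} replacing $\tn{Findim}(B)$ by $\tn{gldim}(B)$, noting that several steps (e.g.\ bounding $\tn{pd}_B X$ via Lemma~\ref{lemma bound on pd of restricted module}, or invoking Lemma~\ref{lemma proj dim of AoverB tensor X}) become trivial under the stronger hypothesis. Your write-up is in fact more detailed than the paper's, and the points you flag as needing care (right $A$-linearity of the contracting homotopy, handling of $K\otimes_A X$ as a summand) are exactly the ones the paper treats in Proposition~\ref{propBfinfindimimpliesAfinfindim}.
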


\begin{proof}
This can be proved exactly as Proposition \ref{propBfinfindimimpliesAfinfindim}, by swapping the left finitistic dimension of $B$ with the left global dimension of $B$ throughout.  The only difference is that several claims proved in Proposition \ref{propBfinfindimimpliesAfinfindim} are trivial when $B$ has finite left global dimension.
\end{proof}

\begin{theorem}\label{theorem abstract preservation of gd and fd}
Let $B\subseteq A$ be a strongly proj-bounded extension of algebras.
\begin{enumerate}
    \item $B$ has finite left global dimension if, and only if, $A$ does.
    \item $\tn{Findim}(B)$ is finite if, and only if, $\tn{Findim}(A)$ is finite.
\end{enumerate}
\end{theorem}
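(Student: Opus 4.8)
The plan is to obtain the theorem as an immediate consequence of the four propositions established earlier in this subsection, since all the real work has been done there. For part (1), Proposition \ref{propAfinitegdimpliesBfinitegd} gives that finiteness of the left global dimension of $A$ forces finiteness of the left global dimension of $B$, and Proposition \ref{propBfinitegdimpliesAfinitegd} gives the converse; combining these yields the biconditional. For part (2), Proposition \ref{prop AfinfindimpliesBfinfindim} gives that finiteness of $\tn{Findim}(A)$ implies finiteness of $\tn{Findim}(B)$, and Proposition \ref{propBfinfindimimpliesAfinfindim} gives the reverse implication; combining these yields the second biconditional. So the proof is a two-line assembly.

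It is worth recording in the write-up which hypotheses enter each half. The two implications of the shape ``finiteness for $B$ implies finiteness for $A$'' (Propositions \ref{propBfinfindimimpliesAfinfindim} and \ref{propBfinitegdimpliesAfinitegd}) are precisely the ones that invoke the relative projective resolution of $A$ from \cite[Proposition 2.1]{CLMS21} together with Lemma \ref{lemma after relprojdim kernel is summand}, and hence use Condition 3 (index of projectivity) and Condition 4 (finite $(A^e,B^e)$-relative projective dimension) of Definition \ref{def proj bounded} --- that is, they genuinely need the extension to be \emph{strongly} proj-bounded. The reverse implications (Propositions \ref{prop AfinfindimpliesBfinfindim} and \ref{propAfinitegdimpliesBfinitegd}) rely only on Conditions 1 and 2 together with the standing assumption that $A/B$ is projective as a right $B$-module. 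Thus the theorem should be read under the section's standing hypothesis that $B\subseteq A$ is strongly proj-bounded, and the statement could be sharpened to say that one direction of each biconditional holds already for arbitrary proj-bounded extensions.

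Because the substantive content is in the propositions, there is no remaining obstacle here; the only mild care needed is bookkeeping. First, one should note that restricting to the case ``$A/B$ projective as a right $B$-module'' costs nothing, as the left case follows by passing to opposite algebras (the definitions of proj-bounded and strongly proj-bounded, and the notions of global and finitistic dimension, being left--right symmetric in the appropriate sense). Second, if one wishes to record the finite-dimensional strengthening flagged at the start of the section --- that finiteness of $\tn{findim}$ is also preserved when $A$ is finite dimensional --- one observes that the bounds produced in Propositions \ref{prop AfinfindimpliesBfinfindim} and \ref{propBfinfindimimpliesAfinfindim} are uniform over modules of finite projective dimension and hence transfer verbatim from $\tn{Findim}$ to $\tn{findim}$ in that case. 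Finally, the entire argument is formal enough that, as noted in the section preamble, the pseudocompact analogue follows by the same reasoning with $\otimes$ replaced by $\ctens$ throughout.
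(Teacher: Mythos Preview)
Your proposal is correct and matches the paper's own proof exactly: the theorem is obtained by simply citing Propositions \ref{propAfinitegdimpliesBfinitegd}, \ref{propBfinitegdimpliesAfinitegd} for part (1) and Propositions \ref{prop AfinfindimpliesBfinfindim}, \ref{propBfinfindimimpliesAfinfindim} for part (2). Your additional remarks about which hypotheses enter where (in particular that the ``$B$ finite $\Rightarrow$ $A$ finite'' direction genuinely needs the strongly proj-bounded assumption from the section's standing hypothesis) are accurate and worth recording, though the paper itself leaves these observations implicit.
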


\begin{proof}
Part 1 is Propositions \ref{propAfinitegdimpliesBfinitegd}, \ref{propBfinitegdimpliesAfinitegd} and Part 2 is Propositions \ref{prop AfinfindimpliesBfinfindim}, \ref{propBfinfindimimpliesAfinfindim}.
\end{proof}

\begin{theorem}\label{Th.FindimFin}
If $A$ is finite dimensional, then $\tn{findim}(B)$ is finite if, and only if, $\tn{findim}(A)$ is finite.
\end{theorem}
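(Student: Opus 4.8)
The plan is to reduce everything to the two ``big'' finitistic-dimension results already proved. Since $A$ is finite dimensional, so is its subalgebra $B$, and therefore every finitely generated $A$- or $B$-module is finite dimensional; in particular the bimodules $A/B$, $(A/B)^{\otimes_B i}$, $A\otimes_B A$, and all of their finitely many analogues appearing in the relative projective resolution of $A$ are finite dimensional, and a direct summand of a finitely generated $A$- or $B$-module is again finitely generated. With these remarks in hand, both implications follow by rerunning the proofs of Propositions \ref{prop AfinfindimpliesBfinfindim} and \ref{propBfinfindimimpliesAfinfindim} essentially verbatim, with $\tn{Findim}$ replaced by $\tn{findim}$, checking only that every module produced from a finitely generated module stays finitely generated.

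For the direction ``$\tn{findim}(A)<\infty\Rightarrow\tn{findim}(B)<\infty$'' I would take a finitely generated left $B$-module $Y$ of finite projective dimension and run the argument of Proposition \ref{prop AfinfindimpliesBfinfindim}: the module $A\otimes_B Y$ is finitely generated over $A$ (generated by $1\otimes y_i$ for a generating set $\{y_i\}$ of $Y$) and has finite $A$-projective dimension by Lemma \ref{lemma pd of induced module}, so $\tn{pd}_A(A\otimes_B Y)\le\tn{findim}(A)$; restricting via Lemma \ref{lemma bound on pd of restricted module} bounds the $B$-projective dimension of $(A\otimes_B Y)_B$ by $\tn{findim}(A)+\tn{pd}_B A$; and the finitely generated $B$-module $(A/B)\otimes_B Y$ has $B$-projective dimension at most $\tn{pd}_{B^e}(A/B)$ by Lemma \ref{lemma proj dim of AoverB tensor X}. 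Feeding these bounds into the short exact sequence $0\to Y\to A\otimes_B Y\to (A/B)\otimes_B Y\to 0$ (exact because $A/B$ is projective as a right $B$-module) bounds $\tn{pd}_B Y$ by $\max(\tn{findim}(A)+\tn{pd}_B A,\ \tn{pd}_{B^e}(A/B))$, a number independent of $Y$, so $\tn{findim}(B)<\infty$.

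For ``$\tn{findim}(B)<\infty\Rightarrow\tn{findim}(A)<\infty$'' I would take a finitely generated $A$-module $X$ of finite projective dimension; being finite dimensional, $X$ is finitely generated over $B$, so $\tn{pd}_B X_B\le\tn{findim}(B)$ by Lemma \ref{lemma bound on pd of restricted module}. Using that the extension is strongly proj-bounded, Lemma \ref{lemma after relprojdim kernel is summand} applied to the relative projective resolution of $A$ shows the relevant kernel $K$ is an $A$-bimodule direct summand of the finite dimensional bimodule $A\otimes_B(A/B)^{\otimes_B p}\otimes_B A$, hence is finite dimensional. Applying $-\otimes_A X$ yields a finite exact sequence of finitely generated left $A$-modules $0\to K\otimes_A X\to A\otimes_B(A/B)^{\otimes_B p}\otimes_B X\to\cdots\to A\otimes_B X\to X\to 0$, in which every term other than $X$ is either of the form $A\otimes_B Y$ for a finitely generated left $B$-module $Y$ whose $B$-projective dimension is at most the constant $c:=\max(\tn{pd}_{B^e}(A/B),\tn{findim}(B))$ (by iterating Lemma \ref{lemma proj dim of AoverB tensor X}, which is valid since $A/B$ is right $B$-projective, together with $\tn{pd}_B X_B\le\tn{findim}(B)$) or a direct summand of such a module; by Lemma \ref{lemma pd of induced module} each such term has $A$-projective dimension at most $c$. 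A dimension-shift along this sequence, whose length $\ell$ depends only on $p$, then gives $\tn{pd}_A X\le c+\ell$, independent of $X$, so $\tn{findim}(A)<\infty$.

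The only place the finite-dimensionality hypothesis is genuinely needed is the bookkeeping of finite generation, above all that the kernel $K$, and hence $K\otimes_A X$, remain finitely generated as direct summands of finitely generated modules. Over the Noetherian algebras $A$ and $B$ this is automatic, which is precisely why the statement must be restricted to $A$ finite dimensional: in the general setting one has no control over a generating set of a direct summand, and this is exactly the gap that forces Theorem \ref{theorem abstract preservation of gd and fd} to be phrased with $\tn{Findim}$ rather than $\tn{findim}$.
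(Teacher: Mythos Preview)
Your proposal is correct and takes essentially the same approach as the paper, which simply observes that when $A$ is finite dimensional the restriction of a finitely generated module to $B$ is finitely generated, so all modules appearing in the proofs of Propositions \ref{prop AfinfindimpliesBfinfindim} and \ref{propBfinfindimimpliesAfinfindim} remain finitely generated. One small remark on your closing paragraph: direct summands of finitely generated modules are always finitely generated over any ring, so the genuine obstruction in the infinite-dimensional case is not the summand $K$ but rather that restriction $X_B$ of a finitely generated $A$-module need not be finitely generated over $B$---exactly the point the paper isolates.
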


\begin{proof}
If $A$ is finite dimensional, then the restriction of a finitely generated module to $B$ is finitely generated.  Thus all the modules appearing in the proofs of Propositions \ref{prop AfinfindimpliesBfinfindim}, \ref{propBfinfindimimpliesAfinfindim} may be assumed to be finitely generated.
\end{proof}

\begin{remark}
In \cite{XiXu} and in
  \cite{Guo2} (see also \cite{Xi04,Xi06, Xi08}) the authors study the finitistic dimension conjecture for extensions of Artin algebras. In particular, a new formulation of the finitistic dimension conjecture in terms
  of relative homological dimension is given.
\end{remark}

\begin{remark}
Reduction techniques to attack the finitistic dimension conjecture already appear in the literature. For instance, in  \cite{GPS}, viewing a finite dimensional algebra as a quotient of a path algebra,  the authors present two operations on the quiver of the algebra, namely arrow removal and vertex removal, and show that these operations preserve the finitude of the finitistic dimension.  That arrow removal preserves $\tn{findim}$ \cite[Theorem A]{GPS} follows from Theorem \ref{Th.FindimFin}, as one may observe that the corresponding extension of algebras is strongly proj-bounded. 
\end{remark}

\begin{remark}\label{remark quotient bifinite extensions}
The full strength of strongly proj-bounded extensions was used to show that when $B$ has finite finitistic dimension, then so does $A$. Analysing the proof of Proposition \ref{prop AfinfindimpliesBfinfindim} one observes that less is required to show the other implication, and indeed it was recently shown that less still is required, at least for finite dimensional algebras \cite{FJ}.
\end{remark}

If $A$ is a pseudocompact algebra, the (big) left finitistic dimension of $A$ is defined in the obvious way: as the supremum of the projective dimensions of those pseudocompact left $A$-modules having finite projective dimension.

\begin{theorem}\label{theorem pc preservation of gd and fd}
Let $B\subseteq A$ be a strongly proj-bounded extension of pseudocompact algebras.
\begin{enumerate}
    \item $B$ has finite global dimension if, and only if, $A$ does.
    \item $B$ has finite left finitistic dimension if, and only if, $A$ does.
\end{enumerate}
\end{theorem}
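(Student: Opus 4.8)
The plan is to observe that Theorem~\ref{theorem pc preservation of gd and fd} is the pseudocompact analogue of Theorem~\ref{theorem abstract preservation of gd and fd}, and that essentially nothing new happens: all the ingredients used in the abstract proof have been set up in the pseudocompact setting earlier in the paper. Concretely, the whole of Section~\ref{Section.HomProperties} rests on six lemmas (Lemma~\ref{lemma pd of proj module restricted} through Lemma~\ref{lemma proj dim of AoverB tensor X}, together with Lemma~\ref{lemma after relprojdim kernel is summand}) and on the relative projective resolution of $A$ coming from \cite[Proposition 2.1]{CLMS21}. First I would go through each of these lemmas and note that its proof uses only formal properties that hold verbatim for pseudocompact modules: exactness of inverse limits, the fact that a projective pseudocompact module is a continuous direct summand of a product of copies of the algebra, right-exactness of $-\ctens_B-$ and its exactness when the second variable is projective, and the adjunction between induction $A\ctens_B-$ and restriction established in Section~\ref{Section pc algebras}. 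The one genuinely structural input, Lemma~\ref{lemma after relprojdim kernel is summand}, was already proved in a way that only uses the relative Schanuel lemma and the existence of $B$-contracting homotopies, both of which make sense (with $\otimes_B$ replaced by $\ctens_B$) for pseudocompact algebras, as explained in Section~\ref{Section. RelHomPC}.

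Next I would verify that the relative projective resolution of $A$ as an $A$-bimodule used in Propositions~\ref{propBfinfindimimpliesAfinfindim} and~\ref{propBfinitegdimpliesAfinitegd}, namely the complex
$$\cdots\to A\ctens_B (A/B)^{\ctens_B p}\ctens_B A\to\cdots\to A\ctens_B A\to A\to 0,$$
remains an $(A^e,B^e)$-relative resolution in the pseudocompact world; this is exactly the content of \cite[Proposition 2.1]{CLMS21} transported through the dictionary $\otimes\rightsquigarrow\ctens$, and the authors already invoked this transport principle in the examples of Section~\ref{Section projbounded} (the completed tensor algebra examples) and in Theorem~\ref{JZsequence pc version}. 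With these pieces in place, the four propositions~\ref{prop AfinfindimpliesBfinfindim}, \ref{propAfinitegdimpliesBfinitegd}, \ref{propBfinfindimimpliesAfinfindim}, \ref{propBfinitegdimpliesAfinitegd} carry over word for word, with ``module'' replaced by ``pseudocompact module'' and all tensor products completed, and the theorem follows by combining them exactly as Theorem~\ref{theorem abstract preservation of gd and fd} was deduced.

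The only point requiring a little care, and the step I expect to be the main obstacle, is the handling of the short exact sequence
$$0\to B\ctens_B Y\to A\ctens_B Y\to (A/B)\ctens_B Y\to 0$$
in the proofs of Propositions~\ref{prop AfinfindimpliesBfinfindim} and~\ref{propAfinitegdimpliesBfinitegd}, and more generally every instance where exactness of a sequence is preserved after applying $-\ctens_B Y$ or $-\ctens_A X$. In the abstract case one uses that $A/B$ is projective (hence flat) as a right $B$-module so that $\mathrm{Tor}_1^B$ vanishes; in the pseudocompact case one must instead use that the completed tensor functor $-\ctens_B Y$ is exact precisely when $Y$ is projective (recorded in Section~\ref{Section pc algebras}) — but here it is the first variable that is projective, so one argues as in Lemma~\ref{lemma pd of AoverB tensor X}'s proof: $A/B$ projective as a right $B$-module means the sequence $0\to B\to A\to A/B\to 0$ of right $B$-modules is continuously split, and applying $-\ctens_B Y$ to a continuously split sequence preserves exactness. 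Once one is comfortable that ``split short exact sequence of pseudocompact right $B$-modules stays split (hence exact) after $-\ctens_B Y$'' — which is immediate since a continuous splitting is a continuous map and $\ctens$ is functorial — every remaining verification is routine, and I would simply state that the proofs of Theorem~\ref{theorem abstract preservation of gd and fd} and its constituent propositions apply mutatis mutandis.

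\begin{proof}
All of the lemmas and propositions of Section~\ref{Section.HomProperties} were proved using only formal properties of modules that hold equally in the category of pseudocompact modules: exactness of inverse limits, the description of projective pseudocompact modules as continuous direct summands of products of copies of the algebra, the right-exactness of $-\ctens_B-$ together with its exactness when the second variable is projective, the induction--restriction adjunction of Section~\ref{Section pc algebras}, the relative Schanuel lemma and the existence of $B$-contracting homotopies (Section~\ref{Section. RelHomPC}). In particular Lemma~\ref{lemma after relprojdim kernel is summand} holds for pseudocompact algebras, and the $(A^e,B^e)$-relative projective resolution of $A$ of \cite[Proposition 2.1]{CLMS21} transports to the pseudocompact setting with $\otimes_B$ replaced by $\ctens_B$, exactly as used in the examples of Section~\ref{Section projbounded} and in Theorem~\ref{JZsequence pc version}. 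The one place where one must take care is that sequences such as
$$0\to B\ctens_B Y\to A\ctens_B Y\to (A/B)\ctens_B Y\to 0$$
remain exact: this is because $A/B$ is projective as a right $B$-module, so the sequence $0\to B\to A\to A/B\to 0$ of pseudocompact right $B$-modules is continuously split, and a continuously split sequence stays split, hence exact, after applying the functor $-\ctens_B Y$. With this observation, the proofs of Propositions~\ref{prop AfinfindimpliesBfinfindim}, \ref{propAfinitegdimpliesBfinitegd}, \ref{propBfinfindimimpliesAfinfindim} and~\ref{propBfinitegdimpliesAfinitegd} apply verbatim, reading ``pseudocompact module'' for ``module'' and $\ctens$ for $\otimes$ throughout, and the theorem follows by combining them as in the proof of Theorem~\ref{theorem abstract preservation of gd and fd}.
\end{proof}
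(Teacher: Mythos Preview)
Your proposal is correct and takes essentially the same approach as the paper, which simply states that ``the arguments of this section go through making only superficial changes (tensors are completed tensors, free modules are products rather than sums, etc.)''; your version is just a more explicit unpacking of what those superficial changes are. The one ingredient you omit is that the pseudocompact statement drops the word ``left'' in Part~1 and speaks of \emph{global dimension} unqualified: the paper justifies this by citing Brumer's observation \cite[p.~449]{Brumer} that the left and right global dimensions of a pseudocompact algebra coincide, so you should add that sentence to complete the argument.
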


\begin{proof}
The arguments of this section go through making only superficial changes (tensors are completed tensors, free modules are products rather than sums, etc.).  The left and right global dimensions of a pseudocompact algebra coincide by an observation of Brumer \cite[p. 449]{Brumer}.
\end{proof}

\subsection{Preservation of vanishing of $\tn{HH}$ and Han's Conjecture}

\begin{theorem}\label{Theorem.HHhom}
Let $B\subseteq A$ be a strongly proj-bounded  extension.  Then $\HH_m(A)$ vanishes for large enough $m$ if, and only if, $\HH_m(B)$ vanishes for large enough $m$.
\end{theorem}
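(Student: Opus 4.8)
The plan is to mimic the strategy of Cibils--Lanzilotta--Marcos--Solotar by exploiting the Jacobi--Zariski long exact sequence of Theorem \ref{JZ proj-bounded} together with the observation that, for a \emph{strongly} proj-bounded extension, the relative Hochschild homology $\HH_*(A\mid B)$ vanishes in high degrees. First I would take coefficients $X = A$ in Theorem \ref{JZ proj-bounded}, obtaining the long exact sequence
\begin{align*}
\ldots \rightarrow \HH_{m}(B, A) \rightarrow \HH_{m}(A, A) \rightarrow \HH_{m}(A \mid B, A) \rightarrow \HH_{m-1}(B, A) \rightarrow \ldots\\
\rightarrow \HH_{nu}(B, A) \rightarrow \HH_{nu}(A, A) \rightarrow \HH_{nu}(A \mid B, A),
\end{align*}
where $n$ is the index of projectivity of $A/B$ and $u = \tn{pd}_{B^e}(A/B)$. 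Here $\HH_m(A,A) = \HH_m(A)$ is exactly what we want to control.

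The key input coming from Condition 4 of Definition \ref{def proj bounded} is that $A$ has finite $(A^e, B^e)$-relative projective dimension, say $d$. By definition this gives a relative projective resolution of $A$ as an $A^e$-module of length $d$, and $\HH_*(A\mid B, X) = \Tor_*^{(A^e,B^e)}(X, A)$ is computed from such a resolution, so $\HH_m(A\mid B, X) = 0$ for all $m > d$ and every bimodule $X$; in particular $\HH_m(A\mid B, A) = 0$ for $m > d$. Feeding this vanishing into the long exact sequence above, for $m > \max\{d, nu\} + 1$ the maps $\HH_m(B,A) \to \HH_m(A,A)$ are surjective and for $m$ larger still they become isomorphisms $\HH_m(B,A) \cong \HH_m(A,A)$: indeed once $m-1 > d$ the term $\HH_m(A\mid B,A)$ and its neighbour both vanish, pinching the sequence. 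Thus $\HH_m(A) \cong \HH_m(B, A)$ for all sufficiently large $m$.

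It remains to relate $\HH_m(B, A)$ to $\HH_m(B) = \HH_m(B,B)$. Since $A/B$ is projective as a right $B$-module (our standing assumption in this section), the short exact sequence $0 \to B \to A \to A/B \to 0$ of $B$-bimodules is split on the right, hence splits $\Tor_*^{B^e}(-, B)$-computations appropriately; more precisely, one obtains a long exact sequence in $\HH_*(B, -)$ associated to it, and one checks that $\HH_m(B, A/B)$ vanishes for large $m$ because $A/B$ has finite projective dimension as a $B^e$-module. Concretely, $\HH_m(B, A/B) = \Tor_m^{B^e}(A/B, B)$ vanishes for $m > \tn{pd}_{B^e}(A/B) = u$. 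Therefore the long exact sequence
\begin{align*}
\ldots \rightarrow \HH_{m}(B, B) \rightarrow \HH_{m}(B, A) \rightarrow \HH_{m}(B, A/B) \rightarrow \HH_{m-1}(B, B) \rightarrow \ldots
\end{align*}
shows $\HH_m(B, B) \cong \HH_m(B, A)$ for all $m$ large enough. Combining the two isomorphisms, $\HH_m(A) \cong \HH_m(B)$ for all sufficiently large $m$, which immediately yields the claimed equivalence: $\HH_m(A)$ vanishes eventually iff $\HH_m(B)$ does.

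\textbf{Main obstacle.} The delicate point is the bookkeeping of exactly which degrees the Jacobi--Zariski sequence is exact in (it is only \emph{almost} exact in low degrees, which is why the sequence of Theorem \ref{JZ proj-bounded} stops at $\HH_{nu}$), and making sure the pinching argument is applied only in the safely-exact range $m \geq nu + 1$; since we only care about asymptotic vanishing this causes no real trouble, but it must be stated carefully. A second point requiring a line of justification is that the relative derived functor $\Tor_*^{(A^e,B^e)}(-,A)$ really does vanish above the relative projective dimension of $A$ — this is the relative analogue of the standard fact and follows because a relative projective resolution of $A$ of length $d$ can be used to compute it (using that relative projectives are acyclic for the relevant relative $\Tor$, cf.\ \cite{Hoch56}). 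In the pseudocompact setting all of these arguments are formal and go through verbatim with $\otimes$ replaced by $\ctens$.
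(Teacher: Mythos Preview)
Your proof is correct and follows essentially the same route as the paper: use the Jacobi--Zariski sequence with $X=A$ to get $\HH_m(A)\cong\HH_m(B,A)$ for large $m$ once the relative term $\HH_m(A\mid B,A)$ vanishes, and then compare $\HH_m(B,A)$ with $\HH_m(B,B)$ via the short exact sequence $0\to B\to A\to A/B\to 0$ and the finiteness of $\tn{pd}_{B^e}(A/B)$. Two minor differences worth noting: (i) you deduce the vanishing of $\HH_m(A\mid B,A)$ for $m>d$ directly from the existence of a length-$d$ relative projective resolution, whereas the paper argues via the specific infinite resolution of \cite{CLMS20-} and Lemma~\ref{lemma after relprojdim kernel is summand} to produce an $A^e$-linear contracting homotopy in high degrees --- your argument is shorter and equally valid; (ii) you obtain both implications at once from the eventual isomorphism $\HH_m(A)\cong\HH_m(B)$, while the paper treats the direction ``$\HH_m(A)$ vanishes $\Rightarrow$ $\HH_m(B)$ vanishes'' separately by an appeal to functoriality.
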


\begin{proof}
    We have $\HH_r(A|B,A) = 0$ for any $r$ bigger than $\mathrm{pd}_{(A^e,B^e)}A$, which is finite by hypothesis, and so Theorem \ref{JZ proj-bounded} implies that $\HH_m(B,A)\iso \HH_m(A)$ for sufficiently large $m$.  On the other hand, applying the functor $-\otimes_{B^e}B$ to the sequence
    $0\to B \to A \to A/B\to 0,$
    one obtains the long exact sequence
    $$\cdots\to \HH_{m+1}(B,A/B)\to \HH_m(B)\to \HH_m(B,A)\to \HH_m(B,A/B)\to \cdots,$$
    and hence $\HH_m(B)\iso \HH_m(B,A)$ for any $m$ greater than $\mathrm{pd}_{B^e}A/B$.  Putting this together, 
    $$\HH_m(A) \iso \HH_m(B)$$
    for large enough $m$ and in particular, one is $0$ if, and only if, the other is.
\end{proof}

\begin{theorem}\label{Theorem.HHhomPC}
Let $B\subseteq A$ be a strongly proj-bounded extension of pseudocompact algebras.  Then $\HH_m(A)$ vanishes for large enough $m$ if, and only if, $\HH_m(B)$ vanishes for large enough $m$.
\end{theorem}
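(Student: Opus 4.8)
The plan is to transport the proof of Theorem~\ref{Theorem.HHhom} almost verbatim to the pseudocompact world: each tool it invokes has a pseudocompact counterpart that has already been put in place above, namely the Jacobi--Zariski long exact sequence of Theorem~\ref{JZsequence pc version} (in the role of Theorem~\ref{JZ proj-bounded}), the relative homological algebra of Section~\ref{Section. RelHomPC}, and Lemma~\ref{lemma after relprojdim kernel is summand}, which holds word for word for pseudocompact modules. So I would structure the argument exactly as the abstract one, keeping careful track of where completed tensor products enter.

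Concretely: for the easy implication, exactly as in the proof of Theorem~\ref{Theorem.HHhom}, functoriality of $\HH_*(-,-)$ applied to the inclusion $B\hookrightarrow A$ shows that if $\HH_m(A)=\HH_m(A,A)$ vanishes then $\HH_m(B)=\HH_m(B,B)$ vanishes too; this is formal and is untouched by replacing $\otimes$ with $\ctens$. For the converse, assume $\HH_m(B)=0$ for $m\gg 0$, and set $n$ for the index of projectivity of $A/B$, $u=\tn{pd}_{B^e}(A/B)$ and $r=\tn{pd}_{(A^e,B^e)}A$, the last being finite because the extension is strongly proj-bounded. The key point is that $\HH_m(A\mid B,A)=\Tor_m^{(A^e,B^e)}(A,A)=0$ for $m>r$: one applies Lemma~\ref{lemma after relprojdim kernel is summand} to the pseudocompact version of the relative projective resolution of $A$ (the analogue of (2.2) in \cite{CLMS20-}) to see that beyond step $r$ its kernels are $(A^e,B^e)$-projective, hence the $B^e$-contracting homotopy may be replaced, in degrees $>r$, by one whose structure maps are $A^e$-module homomorphisms; then $A\ctens_{A^e}-$ leaves the companion complex (the analogue of (2.3)) exact in those degrees, giving the vanishing. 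Feeding this into Theorem~\ref{JZsequence pc version} with $X=A$ yields $\HH_m(A,A)\cong\HH_m(B,A)$ for all large $m$; and since $A/B$ has projective dimension $u$ as a $B^e$-module, the change-of-coefficients argument of \cite{CLMS20-}, again purely formal, gives $\HH_m(B,A)\cong\HH_m(B,B)$ for $m>u$. Chaining the two isomorphisms gives $\HH_m(A)\cong\HH_m(B)$ for all sufficiently large $m$, and the right-hand side is $0$ by hypothesis.

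The argument has no genuine difficulty, and the place where I would actually concentrate the writing — the only real obstacle — is verifying that the two ``formal'' ingredients truly survive completion: that the pseudocompact relative projective resolution of $A$ and its companion complex exist with contracting homotopies of exactly the linearity used above, and that the identifications $M\ctens_{A^e}C'_*(A)\cong C_*(A,M)$ underpinning the definition of relative Hochschild homology behave as expected when $A$ need not be finite dimensional. This is precisely the sort of ``superficial change'' on which Theorems~\ref{JZsequence pc version} and~\ref{theorem pc preservation of gd and fd} already rely, but it is where the content of the statement sits, so I would spell out those checks rather than merely assert them.
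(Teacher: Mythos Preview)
Your proposal is correct and follows exactly the approach of the paper: the paper's own proof simply says that the argument is just as for Theorem~\ref{Theorem.HHhom}, using Theorem~\ref{JZsequence pc version} in place of Theorem~\ref{JZ proj-bounded}. Your write-up is in fact more detailed than the paper's, correctly identifying the roles of Lemma~\ref{lemma after relprojdim kernel is summand}, the relative resolution of \cite{CLMS20-}, and the change-of-coefficients step.
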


\begin{proof}
The proof is just as for Theorem \ref{Theorem.HHhom}, using Theorem \ref{JZsequence pc version} instead of Theorem \ref{JZ proj-bounded}.
\end{proof}

Recall that Han's conjecture \cite[Conjecture 1]{Han06} for finite dimensional algebras asserts that if $A$ is a finite dimensional algebra, then $\tn{HH}_m(A)$ vanishes for sufficiently large $m$ if, and only if, $A$ has finite global dimension.  The following result generalizes \cite[Theorem 4.6]{CLMS20-}.

\begin{corol}\label{corol abstract han}
If $B\subseteq A$ is a strongly proj-bounded extension of finite dimensional algebras, then Han's conjecture holds for $A$ if, and only if, it holds for $B$.
\end{corol}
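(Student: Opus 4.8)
The plan is to deduce the statement purely formally from the two biconditionals already in hand: Theorem \ref{theorem abstract preservation of gd and fd}(1), which says that finitude of the left global dimension is preserved in both directions by a proj-bounded extension (hence in particular by a strongly proj-bounded one, since strongly proj-bounded extensions are proj-bounded), and Theorem \ref{Theorem.HHhom}, which says that vanishing of $\HH_m$ in high degree is preserved in both directions by a strongly proj-bounded extension. Both therefore apply to our extension $B\subseteq A$.

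First I would restate what Han's conjecture asserts for each algebra: for $A$ it is the equivalence ``$\HH_m(A)=0$ for $m\gg 0$'' $\iff$ ``$\mathrm{gldim}\,A<\infty$'', and likewise for $B$. Suppose Han holds for $B$; I claim it holds for $A$. If $\HH_m(A)=0$ for large $m$, then $\HH_m(B)=0$ for large $m$ by Theorem \ref{Theorem.HHhom}, so $\mathrm{gldim}\,B<\infty$ by Han for $B$, whence $\mathrm{gldim}\,A<\infty$ by Theorem \ref{theorem abstract preservation of gd and fd}(1). Conversely, if $\mathrm{gldim}\,A<\infty$ then $\mathrm{gldim}\,B<\infty$ by the same theorem, so $\HH_m(B)=0$ for large $m$ by Han for $B$, and then $\HH_m(A)=0$ for large $m$ by Theorem \ref{Theorem.HHhom}. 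This is exactly the equivalence defining Han's conjecture for $A$.

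The reverse implication (Han for $A$ implies Han for $B$) follows from the identical argument, reading both biconditionals in the other direction; in fact the whole statement is symmetric in $A$ and $B$ precisely because both preservation results are stated as ``if and only if''.

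I do not expect a genuine obstacle: the content is entirely contained in the two preservation theorems, and the only point to watch is that \emph{both} the global-dimension equivalence and the Hochschild-homology equivalence are required. This is exactly why the hypothesis must be \emph{strongly} proj-bounded rather than merely proj-bounded: the latter already suffices for Theorem \ref{theorem abstract preservation of gd and fd}(1), but the proof of Theorem \ref{Theorem.HHhom} uses the finiteness of $\mathrm{pd}_{(A^e,B^e)}A$, which is the extra axiom in Definition \ref{def proj bounded}.
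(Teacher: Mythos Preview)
Your argument is correct and is exactly the paper's approach: the corollary is stated as immediate from Theorems \ref{theorem abstract preservation of gd and fd} and \ref{Theorem.HHhom}, and you have simply unpacked that one-line deduction. Your remark on why the \emph{strongly} proj-bounded hypothesis is needed is also accurate.
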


\begin{proof}
This is immediate from Theorems \ref{theorem abstract preservation of gd and fd} and \ref{Theorem.HHhom}.
\end{proof}

\begin{remark}
Of course, a similar statement holds for more general algebras, but for general pseudocompact algebras, Han's conjecture is false.  For example, consider the completed path algebra of the infinite quiver
 \[Q = \begin{tikzcd}
 0 & 1 \arrow[l] & 2 \arrow[l] & \cdots \arrow[l] &  
 \end{tikzcd}\]
and the pseudocompact algebra $A = k\db{Q}/J^2$, where $J^2$ is the closed ideal generated by the paths of length $2$.  Then the simple left $A$-module $S_n$ at vertex $n$ has projective dimension $n$, and hence the global dimension of $A$ is infinite.  Meanwhile, $\tn{HH}_*(A)$ is concentrated in degree $0$, as can be seen using Proposition \ref{prop.HomologyLimit}.  The quiver extending infinitely in the other direction is even worse: we still have $\tn{HH}_*(A)$ concentrated in degree $0$, but in this algebra every simple left module has infinite projective dimension.

While Han's Conjecture is false for general pseudocompact algebras, it may hold for interesting special classes.  For example, it holds for the completed group algebra of certain profinite groups \cite[Section 5]{G23}.
\end{remark}

 \newcommand{\noop}[1]{}

\end{document}